\documentclass[a4paper]{article}

\usepackage{fullpage}
\usepackage[utf8]{inputenc} 
\usepackage[T2A]{fontenc}

\usepackage{amsfonts}
\usepackage{amssymb, amsthm, amscd}
\usepackage{amsmath}
\usepackage{mathtools}
\usepackage{comment}
\usepackage[unicode]{hyperref}
\usepackage[usenames,dvipsnames]{xcolor}
\usepackage{colortbl}
\usepackage{textcomp}
\usepackage{cite}
\usepackage{euscript}
\usepackage[shortlabels]{enumitem}

\usepackage{graphicx}

\usepackage{float}

\newcommand{\R}{\mathbb{R}}

\newcommand{\Z}{\mathbb{Z}}

\renewcommand{\subset}{\subseteq}
\renewcommand{\phi}{\varphi}
\newcommand{\sech}{\operatorname{sech}}
\newcommand{\csch}{\operatorname{csch}}

\renewcommand{\d}{\mathrm{d}}

\theoremstyle{theorem}
\newtheorem{theorem}{Theorem}
\newtheorem{display-theorem}{Theorem}
\newtheorem{lemma}[theorem]{Lemma}
\newtheorem{proposition}[theorem]{Proposition}
\newtheorem{corollary}[theorem]{Corollary}

\theoremstyle{definition}
\newtheorem{definition}[theorem]{Definition}
\newtheorem{example}[theorem]{Example}

\theoremstyle{remark}
\newtheorem{remark}[theorem]{Remark}

\title{Logarithmic spirals on surfaces of constant Gaussian curvature}
\date{}
\author{
	Casey BLACKER
	\thanks{Department of Mathematical Sciences, George Mason University,
	\texttt{cblacke@gmu.edu}}
	,
	Pavel TSYGANENKO
	\thanks{Department of Mathematics and Computer Science, Saint Petersburg State University,
	\texttt{st104991@student.spbu.ru}}
}

\begin{document}\maketitle 

\begin{abstract}
	We compute the geodesic curvature of logarithmic spirals on surfaces of constant Gaussian curvature. In addition, we show that the asymptotic behavior of the geodesic curvature is independent of the curvature of the ambient surface. We also show that, at a fixed distance from the center of the spiral, the geodesic curvature is continuously differentiable as a function of the Gaussian curvature.
\end{abstract}

\noindent {\bf MSC classification (2020):} 53A04, 53A05, 53B20

\noindent {\bf Keywords:} geometry of curves and surfaces, logarithmic spirals, geodesic curvature

\tableofcontents


\section*{Introduction}
A logarithmic spiral is a curve that approaches a fixed point at a constant bearing. While traditionally defined in the plane, this characterization extends naturally to any surface equipped with a metric structure. Our aim in this paper is to pursue this generalization. In particular, we first compute the geodesic curvature of logarithmic spirals on surfaces $S$ of constant Gaussian curvature $K$, and second investigate the dependence of the geodesic curvature on $K$.

The logarithmic spiral has been a source of mathematical interest at least as early as Descartes \cite{OConnorRoberston}. Jacob Bernoulli --- who styled it the \emph{spira mirabilis}, or the marvelous spiral  --- went so far as to request that the curve be inscribed on his gravestone \cite{OConnorRoberston}.

\begin{figure}
	\centering
	\includegraphics[trim={0 .7cm 0 .5cm},clip,scale=0.6]{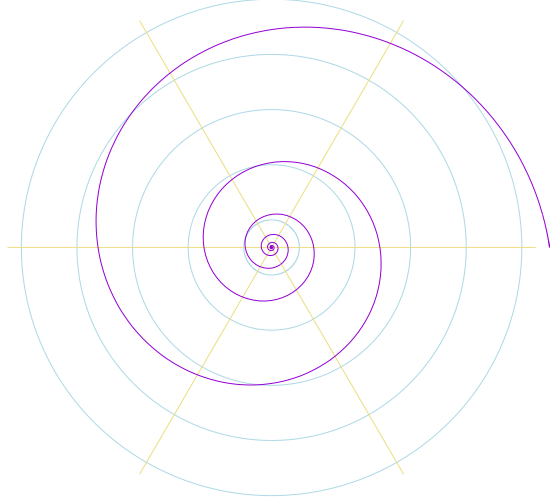}
	\caption{Logarithmic spiral}
	\label{fig:logarithmic_spiral}
\end{figure}

The paper is organized as follows.

In Section \ref{sec:background}, we review relevant background on curves and surfaces, surfaces of constant curvature, and loxodromes and logarithmic spirals.

Let $\gamma_\theta(K,r)$ denote a logarithmic spiral, meeting each member of a concentric family of circles at constant angle $0<\theta<\pi$ and parametrized by the distance $r>0$ to its center, on an oriented surface of constant Gaussian curvature $K\in\R$. In Section \ref{sec:computation_of_the_geodesic_curvature}, we show that the geodesic curvature of $\gamma_\theta$ is
\[
	k_\theta(K,r) = \cos\theta\cdot
		\begin{cases}
			\sqrt{K}\cot(r\sqrt{K})		&\text{if } K>0		\\
			1/r				&\text{if } K=0		\\
			\sqrt{-K}\coth(r\sqrt{-K})	&\text{if } K<0.
		\end{cases}
\]

In Section \ref{sec:analytic_properties_of_k}, we investigate the dependence of $k_\theta(K,r)$ on $K$. We first establish that the asymptotic behavior of $k_\theta(K,r)$ is independent of $K$. More precisely, we have the following theorem.

\setcounter{display-theorem}{24}
\begin{display-theorem}
	For all $K,K'\in\R$,
	\[
		\lim_{r\to0} \frac{k_\theta(K,r)}{k_\theta(K',r)} = 1.
	\]
\end{display-theorem}

We then show that $k_\theta(K,r)$ is differentiable in $K$ at $0$ and we compute the value of the derivative.

\setcounter{display-theorem}{26}
\begin{display-theorem}
	For every $r>0$,
	\[
		\frac{\partial k_\theta}{\partial K}(0,r) = -\frac{r}{3}\cos\theta.
	\]
\end{display-theorem}

As an immediate consequence of this, which we register in Corollary \ref{cor:k_is_continuously_differentiable_in_K}, it follows that $k_\theta(K,r)$ is continuously differentiable in $K$ wherever it is defined.

Finally, we show that near the center of the logarithmic spiral $\gamma_\theta(K,r)$ the magnitude of the geodesic curvature decreases as $K$ increases.

\setcounter{display-theorem}{29}
\begin{display-theorem}
	For all $0<\theta<\pi$, for all $K\in\R$, and for sufficiently small $r>0$,
	\[
		\frac{\partial}{\partial K}\,|k_\theta(K,r)|\leq 0,
	\]
	with equality if and only if $\theta=\frac{\pi}{2}$, that is, if and only if $\gamma_\theta(K,r)$ is a geodesic ray.
\end{display-theorem}

Loxodromes and logarithmic spirals have seen some recent interest in the literature. Properties of planar logarithmic spirals are treated in \cite{Kurnosenko09,Bolt07} and a generalization of loxodromes to hypersurfaces of revolution in Euclidean spaces is investigated in \cite{BlackwoodDukehartJavaheri17}. A very recent study of the curvature and torsion of loxodromes on surfaces of constant Gaussian curvature appears in \cite{KahramanAksoyakBektasBabaarslan23}.

In the future, it may be interesting to investigate familiar classes of curves that admit natural interpretations on arbitrary $2$-dimensional Riemannian manifolds, such as logarithmic spirals, in settings like the hyperbolic plane.

As regards notation and conventions, we broadly follow \cite{doCarmo76}.

\begin{table}[H]
	\centering
	\begin{tabular}{lll}
					&				&			\\ \hline
		symbol			&meaning			&reference		\\ \hline\vspace{-5pt}
					&				&			\\
		$S$			&surface in $\R^3$				&sec.\ \ref{subsec:curvature_definitions}\\
		$\phi(u,v)$		&parametrization of $S$				&sec.\ \ref{subsec:curvature_definitions}\\
		$\gamma_u$,$\gamma_v$	&coordinate curves				&def.\ \ref{def:coordinate_curves}	\\
		$N$			&orientation					&def.\ \ref{def:orientation}\\
		$E,F,G$			&components of the first fundamental form	&def.\ \ref{def:first_fundamental_form}\\
		$e,f,g$			&components of the second fundamental form	&def.\ \ref{def:second_fundamental_form}\\
		$K$			&Gaussian curvature				&def.\ \ref{def:Gaussian_curvature}\\
		$\gamma(t)$		&curve on a surface				&sec.\ \ref{subsec:curvature_definitions}\\
		$k$			&geodesic curvature				&def.\ \ref{def:geodesic_curvature}\\
		$r$ 			&radius of a geodesic circle			&def.\ \ref{def:geodesic_circle}\\
		$\alpha(t)$		&generating curve for a surface of revolution	&def.\ \ref{def:surface_of_revolution}\\
		$R$			&radius of a sphere or pseudosphere		&ex.\ \ref{ex:sphere}, \ref{ex:pseudosphere}\\
		$\theta$		&characteristic angle of a logarithmic spiral	&def.\ \ref{def:logarithmic_spiral}\\
		$k_1,k_2$		&geodesic curvatures of coordinate curves	&thm.\ \ref{thm:Liouville}\\
					&						&				\\
					&		&		\\
	\end{tabular}
	\caption{Glossary of notation.}
	\label{tab:symbols}
\end{table}


\section{Background}\label{sec:background}

Our first task is to establish certain background material. In Subsection \ref{subsec:curvature_definitions}, we quickly outline the basic definitions pertaining to curvature. We use this in Subsection \ref{subsec:constant_Gaussian_curvature} to show that the plane, the sphere, and the pseudosphere are, respectively, surfaces of zero, constant positive, and constant negative curvature. Finally, in Subsection \ref{subsec:loxodromes_and_logarithmic_spirals} we define loxodromes and introduce our notion of a logarithmic curve on an arbitrary surface.


\subsection{Gaussian and geodesic curvature}\label{subsec:curvature_definitions}

Before proceeding, we first recall some well-known definitions regarding curves, surfaces, and curvature. As our primary aim is to establish our notation and conventions, the exposition is brief. For a comprehensive introduction, we refer to \cite{doCarmo76}.

Fix a surface $S\subset\R^3$ and a parametrization $\phi:U\subset\R^2\to S\subset\R^3$.

\begin{definition}\label{def:orientation}
	An \emph{orientation} on $S$ is a unit normal vector field $N:S\to \R^3$.
\end{definition}

If $N$ is an orientation of $S$, then so is $-N$. Locally, the possible orientations are
\[
	N(u,v) = \pm \frac{\phi_u\times\phi_v}{\|\phi_u\times\phi_v\|}.
\]
If the surface $S$ is connected, then $S$ admits either two or zero orientations.

\begin{definition}\label{def:coordinate_curves}
	The \emph{coordinate curves} associated to a parametrization $\phi:U\subset\R^2\to S\subset\R^3$ are given by 
	\[
		\gamma_v(t) = \phi(t,v)
	\]
	and
	\[
		\gamma_u(t) = \phi(u,t),
	\]
	for fixed $v$ and $u$, respectively.
\end{definition}

\begin{definition}\label{def:first_fundamental_form}
	The \emph{first fundamental form} of $S$ at $p$ is the quadratic form $\mathrm{I}_p$ on $T_pS$ defined by
	\[
		\mathrm{I}_p (v) = \langle v,v \rangle, \quad v \in T_pS.
	\]
	If $\phi$ is the parametrization, then the coefficients of the first fundamental form are given as
	\begin{align*}
		E	&=	\langle \phi_u, \phi_u \rangle,	\\
		F	&=	\langle \phi_u, \phi_v \rangle,	\\
		G	&=	\langle \phi_v, \phi_v \rangle.
	\end{align*}
\end{definition}

\begin{definition}\label{def:second_fundamental_form}
	The \emph{second fundamental form} of $S$ at $p$ is the quadratic form $\mathrm{II}_p$ on $T_pS$ given by
	\[
		\mathrm{II}_p(v) = - \langle dN_p(v), v \rangle.
	\]
	In terms of the parametrization $\phi$, the coefficients of $\mathrm{II}$ are
	\begin{align*}
		e	&=	\langle N_u, \phi_u \rangle,	\\
		f	&=	\langle N_u, \phi_v \rangle,	\\
		g	&=	\langle N_v, \phi_v \rangle.
	\end{align*}
\end{definition}

\begin{definition}(\cite[p.\ 155, Equation 4]{doCarmo76})\textbf{.} \label{def:Gaussian_curvature}
	The Gaussian curvature of $S$ is
	\[
		K = \frac{eg - f^2}{EG - F^2}.
	\]
\end{definition}

Crucially, the Gaussian curvature does not depend either on the choice parametrization $\phi$ or the orientation $N$.

\begin{definition}\label{def:geodesic_curvature}
	The \emph{geodesic curvature} of a unit-speed curve $\alpha:I\to S$ is the quantity
	\[
		k(t) = \langle \alpha'', N \times \alpha' \rangle.
	\]
	When $k$ is constantly zero we say that $\alpha$ is a \emph{geodesic}.
\end{definition}

The geodesic curvature of $\gamma$ at $t\in\R$ depends on both the orientation of $\gamma$ (heuristically, its direction; see \cite[p.\ 6]{doCarmo76}) and that of $S$. Specifically, reparametrizing $\gamma(t)$ as $\gamma(t_0-t)$ for some $t_0\in\R$ and replacing the surface orientation $N$ with its opposite $-N$ each contribute a factor of $-1$ to $k$.

\begin{definition}
	The \emph{distance} between two points $p,q\in S$ is the infimum over the length
	\[
		\mathrm{len}(\gamma) = \int_a^b \|\gamma'(t)\| \,\mathrm{d}t
	\]
	of all paths $\gamma:[a,b]\subset\R\to S$ with $\gamma(a)=p$ and $\gamma(b)=q$, and is undefined if no such path exists.
\end{definition}

\begin{definition}\label{def:geodesic_circle}
	For $p\in S$ and $r\in(0,r_{\text{max}})$, the \emph{geodesic circle} $C_p(r)$ is the set of points $q\in S$ of distance $r$ to $p$.
\end{definition}

We will always assume $r_{\text{max}}$ to be sufficiently small to ensure that the sets $C_p(r)$ are differentiable circles in $S\subset\R^3$.

\subsection{Surfaces of constant Gaussian curvature}\label{subsec:constant_Gaussian_curvature}

Having established the basic definitions, we now turn to our three key examples of surfaces: the plane, the sphere, and the pseudosphere. After presenting their definitions, we establish that each is a surface of constant Gaussian curvature.

\begin{definition}\label{def:surface_of_revolution}
	A \emph{surface of revolution} is a surface in $\R^3$ created by rotating a curve $\alpha:\R\to\R^3$, called a \emph{profile curve} or a \emph{generating curve}, around an axis of rotation. The \emph{standard parametrization} of a surface of revolution $S$ with generating curve $\alpha(t)=(x(t),z(t))$ in the $xz$-plane and axis of rotation the $z$-axis is given by
	\[
		\phi(u,v) = (x(v)\cos u, x(v)\sin u, z(v)).
	\]
\end{definition}

\begin{example}\label{ex:plane}
	The standard parametrization of the $xy$-plane $S\subset\R^3$ as a surface of revolution with generating curve
	\[
		\alpha(t)=(t,0), \hspace{1cm}t>0,
	\]
	is the familiar polar parametrization given by
	\[
		\phi(u,v) = (v\cos u,v\sin u,0)
	\]
	for $u\in[0,2\pi)$ and $v>0$. We will always assume $S$ to be oriented by the upward-pointing unit normal vector field $N(u,v)=(0,0,1)$.
\end{example}

The Gaussian curvature of a plane $S$ is zero at every point $p\in S$. This follows from the fact that the unit normal vector field $N$ is constant: The equality $N_u=N_v=0$ implies $e=f=g=0$ from which $K=0$.

\begin{example}\label{ex:sphere}
	The parametrization of the semicircle of radius $R>0$ in the $xz$-plane,
	\[
		\alpha(t) = (R\sin t,R\cos t),	\hspace{1cm}0< t< \pi,
	\]
	yields the standard parametrization of the sphere of radius $R>0$ in $\R^3$,
	\[
		\phi(u,v) = (R\sin v\cos u, R\sin v\sin u, R\cos v).
	\]
	We will always assume that the orientation of $S$ is given by the outward pointing unit normal vector field $N(u,v)=\frac{1}{R}\,\phi(u,v)$.
\end{example}

\begin{proposition}\label{prop:sphere_Gaussian_curvature}
	The sphere of radius $R>0$ has constant Gaussian curvature $K=\frac{1}{R^2}$.
\end{proposition}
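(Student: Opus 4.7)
The plan is a direct computation using the definitions of the first and second fundamental forms established in the previous subsection. From the given standard parametrization $\phi(u,v) = (R\sin v\cos u, R\sin v\sin u, R\cos v)$, I would first compute the tangent vectors $\phi_u$ and $\phi_v$, use these to assemble $E$, $F$, $G$, and then compute $N_u$ and $N_v$ to assemble $e$, $f$, $g$. Finally I would substitute into the formula $K = (eg - f^2)/(EG - F^2)$ from Definition \ref{def:Gaussian_curvature}.

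The calculation can be streamlined by exploiting the stipulated orientation $N(u,v) = \frac{1}{R}\phi(u,v)$. Because this $N$ is a scalar multiple of $\phi$, differentiation in $u$ and $v$ gives $N_u = \frac{1}{R}\phi_u$ and $N_v = \frac{1}{R}\phi_v$ immediately, without any appeal to the explicit coordinates. Under the conventions of Definition \ref{def:second_fundamental_form}, this produces the proportionalities
\[
    e = \tfrac{1}{R}E, \qquad f = \tfrac{1}{R}F, \qquad g = \tfrac{1}{R}G,
\]
so that $eg - f^2 = \frac{1}{R^2}(EG - F^2)$, and therefore $K = 1/R^2$ follows at once, independently of the point $(u,v)$. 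The only nontrivial verification left is that $EG - F^2 \neq 0$ on the parameter domain $0 < v < \pi$, which I would settle by the brief routine calculation giving $E = R^2\sin^2 v$, $F = 0$, $G = R^2$, and hence $EG - F^2 = R^4 \sin^2 v > 0$ for $0 < v < \pi$.

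There is no real obstacle here; the proof is essentially a one-line observation once the relation $N = \phi/R$ is invoked. The only mild subtlety is to note that the formula $K = (eg - f^2)/(EG - F^2)$ is even in $N$, so even if one were to use the opposite (inward) normal, the value $K = 1/R^2$ would be unchanged, consistent with the remark in the excerpt that $K$ is independent of the orientation. I would present the streamlined argument via the proportionality $N_u = \phi_u/R$, $N_v = \phi_v/R$, supplemented by the explicit values of $E$, $F$, $G$ for completeness and to confirm that $EG - F^2$ does not vanish on the parameter domain.
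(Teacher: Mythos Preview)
Your argument is correct. Both you and the paper proceed by computing the coefficients of the first and second fundamental forms and substituting into the formula $K=(eg-f^2)/(EG-F^2)$, so the overall approach is the same. The paper carries out the full brute-force computation of $\phi_u$, $\phi_v$, $N$, $N_u$, $N_v$ in coordinates and then assembles all six coefficients explicitly. Your version is more streamlined: by exploiting the stipulated relation $N=\phi/R$ you obtain $N_u=\phi_u/R$ and $N_v=\phi_v/R$ without any coordinate work, which immediately yields $(e,f,g)=\tfrac{1}{R}(E,F,G)$ and hence $K=1/R^2$. This buys you a shorter and more conceptual proof; the paper's explicit computation, on the other hand, serves as a template reused verbatim in the subsequent pseudosphere calculation, where no such shortcut is available.
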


\begin{proof}
	We have
	\begin{align*}
		\phi_u	&=	(-R\sin v\sin u, R\sin v\cos u, 0)		\\
		\phi_v	&=	(R\cos v\cos u, R\cos v\sin u, -R\sin v)	\\
	\end{align*}
	and
	\begin{align*}
		N	&=	-\frac{\phi_u\times\phi_v}{\|\phi_u\times\phi_v\|}	=	(\sin v\cos u,\sin v\sin u,\cos v)		\\
		N_u	&=	(-\sin v\sin u, \sin v\cos u, 0)										\\
		N_v	&=	(\cos v\cos u,\cos v\sin u, -\sin v),
	\end{align*}
	from which
	\begin{align*}
		E	&=	\langle\phi_u,\phi_u\rangle = R^2\sin^2 v		&e &=\langle N_u,\phi_u\rangle = R\sin^2v	\\
		F	&=	\langle\phi_u,\phi_v\rangle = 0				&f &=\langle N_u,\phi_v\rangle = 0	\\
		G	&=	\langle\phi_v,\phi_v\rangle = R^2			&g &=\langle N_v,\phi_v\rangle = R,
	\end{align*}
	so that
	\[
		K = \frac{eg-f^2}{EG-F^2} = \frac{1}{R^2}.
	\]
\end{proof}

\begin{example}\label{ex:pseudosphere}
	The \emph{pseudosphere} of radius $R>0$ is the surface obtained by revolving the tractrix
	\[
		\alpha(t) = \big(R\sin t, R \ln \tan(t/2) + R\cos t\big),\hspace{1cm}0< t< \pi/2,
	\]
	in the $xz$-plane about the $z$-axis. The corresponding parametrization is
	\[
		\phi(u,v) = \big(R \sin v\cos u, R \sin v \sin u, R\ln\tan(v/2)+ R\cos v\big).
	\]
	We will orient $S$ by the outward-pointing unit vector field $N=\frac{\phi_u\times\phi_v}{\|\phi_u\times\phi_v\|}$.
\end{example}

Observe that $\phi(u,0)$ is a point at infinity, and that $\phi(u,\pi/2)$ describes a circle of radius $R$ in the $xy$-plane.

\begin{figure}
	\centering
	\includegraphics[trim={2.5cm 3cm 2.5cm 2cm},clip,scale=1]{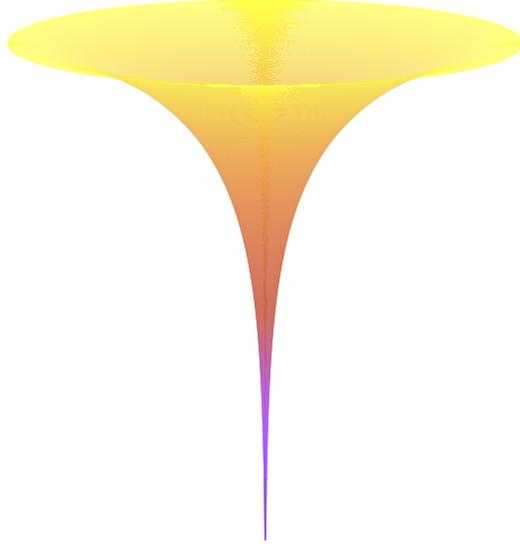}
	\caption{Pseudosphere}
	\label{fig:pseudosphere}
\end{figure}

\begin{proposition}\label{prop:pseudosphere_Gaussian_curvature}
	The pseudosphere of radius $R>0$ has constant Gaussian curvature $K=-\frac{1}{R^2}$.
\end{proposition}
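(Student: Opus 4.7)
The plan is to follow the same computational template as Proposition \ref{prop:sphere_Gaussian_curvature}, substituting the Gaussian curvature formula $K = (eg-f^2)/(EG-F^2)$ of Definition \ref{def:Gaussian_curvature} into the standard parametrization of the pseudosphere. Concretely, I first compute the partial derivatives $\phi_u$ and $\phi_v$. The key simplification here is the identity
\[
	\frac{\d}{\d v}\bigl(R\ln\tan(v/2)+R\cos v\bigr) = \frac{R}{\sin v} - R\sin v = \frac{R\cos^2 v}{\sin v},
\]
which keeps the third component of $\phi_v$ manageable. With this in hand, $\phi_u$ has the same form as in the spherical case, while $\phi_v = \bigl(R\cos v\cos u,\, R\cos v\sin u,\, R\cos^2 v/\sin v\bigr)$.

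Next I compute the coefficients of the first fundamental form. The cross terms in $F=\langle\phi_u,\phi_v\rangle$ cancel as in the sphere case, so $F=0$. A short calculation yields $E = R^2\sin^2 v$ and, after applying $\sin^2 v + \cos^2 v = 1$, $G = R^2\cos^2 v/\sin^2 v$. This gives $EG - F^2 = R^4\cos^2 v$, and hence $\|\phi_u\times\phi_v\| = R^2\cos v$ on the domain $0<v<\pi/2$.

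Then I compute $N = (\phi_u\times\phi_v)/\|\phi_u\times\phi_v\|$ and its derivatives $N_u$ and $N_v$, and pair these with $\phi_u$ and $\phi_v$ to obtain $e,f,g$. As in the spherical case the off-diagonal coefficient $f$ vanishes, so the Gaussian curvature reduces to $K = eg/(EG)$. Substituting the values of $e$, $g$, $E$, $G$ should collapse to the claimed $K = -1/R^2$, with the minus sign arising because on the tractrix the outward normal twists in the opposite sense from the spherical case.

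The main obstacle is purely bookkeeping: unlike the sphere, $\phi$ and $N$ have no simple proportionality, so $N$ and its derivatives must be computed explicitly from the cross product, and one must track several $\sin v$, $\cos v$ factors through the algebra. No conceptual subtlety beyond the differentiation identity for $\ln\tan(v/2)$ should appear, and the independence of $K$ from $u$ will be manifest since every coefficient depends only on $v$.
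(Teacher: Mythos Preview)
Your proposal is correct and follows essentially the same computational template as the paper's own proof: compute $\phi_u,\phi_v$, simplify the third component of $\phi_v$ via the derivative of $\ln\tan(v/2)$ (your $R\cos^2 v/\sin v$ is exactly the paper's $R\cos v\cot v$), then work out $N,N_u,N_v$ and the coefficients $E,F,G,e,f,g$ before substituting into $K=(eg-f^2)/(EG-F^2)$. The only differences are cosmetic ordering and notation.
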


\begin{proof}
	As above, we compute
	\begin{align*}
		\phi_u	&=	(-R\sin v\sin u, R\sin v\cos u, 0)		\\
		\phi_v	&=	\Big(R\cos v\cos u,R\cos v\sin u, R\frac{\sec^2(v/2)}{2\tan(v/2)} - R\sin v\Big)	\\
			&=	(R\cos v\cos u,R\cos v\sin u, R\cos v\cot v),
	\end{align*}
	where the last equality follows as
	\begin{align*}
		\frac{\sec^2(v/2)}{2\tan(v/2)} - \sin v
			&=	\frac{1}{2\sin(v/2)\cos(v/2)} - \sin v			\\
			&=	\frac{1}{\sin v} - \frac{\sin^2 v}{\sin v}		\\
			&=	\cos v\cot v.
	\end{align*}
	Consequently,
	\begin{align*}
		N	&=	\frac{\phi_u\times\phi_v}{\|\phi_u\times\phi_v\|}	
			 =	(\cos u\cos v, \sin u\cos v,-\sin v)		\\
		N_u	&=	(-\sin u\cos v, \cos u\cos v,0)		\\
		N_v	&=	(-\cos u\sin v, -\sin u\sin v,-\cos v),
	\end{align*}
	and so
	\begin{align*}
		E	&=	\langle\phi_u,\phi_u\rangle = R^2\sin^2v		&e &=\langle N_u,\phi_u\rangle = R\sin v\cos v	\\
		F	&=	\langle\phi_u,\phi_v\rangle = 0				&f &=\langle N_u,\phi_v\rangle = 0			\\
		G	&=	\langle\phi_v,\phi_v\rangle = R^2\cos^2v\csc^2v		&g &=\langle N_v,\phi_v\rangle = -R\cos v\csc v.
	\end{align*}
	We conclude that
	\[
		K = \frac{eg-f^2}{EG-F^2} = -\frac{1}{R^2}.
	\]
\end{proof}


\subsection{Loxodromes and logarithmic spirals}\label{subsec:loxodromes_and_logarithmic_spirals}
 
In this subsection, we define loxodromes on surfaces of revolution and logarithmic spirals on arbitrary surfaces. We examine explicit parametrizations of these curves on planes and spheres.

\begin{definition}
	Let $S\subset\R^3$ be a surface of revolution with generating curve $\alpha(t)=(x(t),z(t))$.
	\begin{enumerate}
		\item A \emph{parallel} is a circle in $S$ obtained by rotating a single point $p\in S$ about the axis of rotation.
		\item A \emph{meridian} is a curve in $S$ obtained by rotating the generating curve $\alpha$ by a fixed angle $u\in\R$ about the axis of rotation.
		\item A \emph{loxodrome} is a curve that intersects all parallels at a fixed angle $0<\theta<\pi$.
	\end{enumerate}
\end{definition}

\begin{remark}
	Parallels and meridians are limiting cases of loxodromes as $\theta$ approaches $0$ and $\pi$, respectively. In terms of the standard parametrization $\phi$, the parallels of $S$ are the coordinate curves
	\[
		\gamma_v(t) = \phi(t,v)
	\]
	and the meridians are the coordinate curves
	\[
		\gamma_u(t) = \phi(u,t),
	\]
	for fixed values $u,v\in\R$.
\end{remark}

Note that the convention that $\theta$ is the signed angle from $\gamma_v'$ to $\gamma'$, with respect to the orientation $N$ of the underlying surface, determines an orientation on $\gamma$. We will always assume $\gamma$ to be equipped with this orientation, regardless of our choice of parametrization.

\begin{definition}\label{def:logarithmic_spiral}
	A \emph{logarithmic spiral} with characteristic angle $0<\theta<\pi$ on a surface $S$ about a fixed point $p \in S$ is a curve $\gamma(t)$ that intersects each geodesic circle $C_p(r)$ at constant angle $\theta$.
\end{definition}

We will adopt the convention that a logarithmic spiral is oriented so that $\theta$ is the signed angle from $\gamma_{C_p(r)}'$ to $\gamma'$, with respect to the orientation of the underlying surface, where $\gamma_{C_p(r)}(t)$ is a positively-oriented parametrization of $C_p(r)$. As with loxodromes, we will assume $\gamma$ to carry this orientation irrespective of our choice of parametrization.

Since the parallels $\gamma_v(t)$ on a sphere are the geodesic circles about the north and south poles, it follows that every loxodrome on a sphere is a logarithmic spiral about the north and south poles. Indeed, the logarithmic spirals on a sphere or a plane are precisely the loxodromes with respect to a suitable parametrization.

In contrast, loxodromes and logarithmic spirals on a pseudosphere are distinct. This is easily seen, as the logarithmic spirals necessarily approach their center point, while the loxodromes on the pseudosphere do not approach any point. To clarify this qualitative difference, a visual approximation to a loxodrome on a pseudosphere is provided in Figure \ref{fig:pseudosphere_loxodrome}.

\begin{figure}
	\centering
	\includegraphics[trim={2.5cm 3cm 2.5cm 2cm},clip,scale=1]{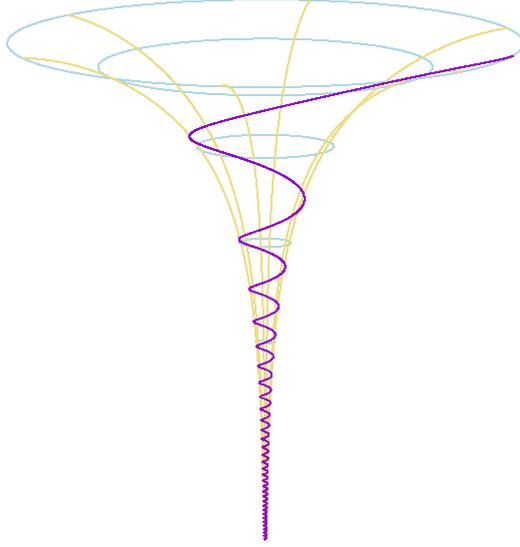}
	\caption{Loxodrome on a pseudosphere}
	\label{fig:pseudosphere_loxodrome}
\end{figure}

\begin{proposition}\label{prop:classic_logarithmic_curve}
	For $a\neq 0$, the curve
	\[
		\gamma(t) = e^{-at}(\cos t,\sin t),
	\]
	is a logarithmic spiral with characteristic angle $\theta=\arctan a$ about the origin in the plane $\R^2$.
\end{proposition}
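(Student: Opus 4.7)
The plan is to perform the direct verification using the definition of logarithmic spiral in conjunction with the convention regarding orientation. Since the plane carries zero Gaussian curvature, geodesic circles about the origin coincide with the familiar Euclidean circles, so the problem reduces to computing the angle between $\gamma'$ and the circular tangent at each intersection point.

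First I would compute $|\gamma(t)|=e^{-at}$, which shows that $\gamma$ meets the circle $C_0(r)$ of radius $r=e^{-at}$ at the unique parameter $t=-(\log r)/a$ (mod $2\pi$ in the angular coordinate). Next I would compute
\[
	\gamma'(t) = e^{-at}\bigl(-a\cos t-\sin t,\ -a\sin t+\cos t\bigr),
\]
and observe that a positively-oriented unit tangent to $C_0(r)$ at $\gamma(t)$ is $T_C(t)=(-\sin t,\cos t)$, since $\gamma(t)/|\gamma(t)|=(\cos t,\sin t)$ and the orientation of the plane is the upward normal $N=(0,0,1)$.

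Then I would compute $\gamma'(t)\cdot T_C(t) = e^{-at}$ and $|\gamma'(t)|=e^{-at}\sqrt{1+a^2}$, so that the unsigned angle $\theta$ between $\gamma'$ and $T_C$ satisfies $\cos\theta = 1/\sqrt{1+a^2}$, which is exactly $\cos(\arctan a)$. To nail down the sign, I would set $\theta=\arctan a$ (so that $a=\tan\theta$) and verify by direct substitution that applying the rotation matrix $\bigl(\begin{smallmatrix}\cos\theta & -\sin\theta\\ \sin\theta & \cos\theta\end{smallmatrix}\bigr)$ to $T_C(t)$ yields the unit vector $\gamma'(t)/|\gamma'(t)| = \bigl(-\sin(t+\theta),\cos(t+\theta)\bigr)$; this shows that the signed angle from $T_C$ to $\gamma'$ with respect to $N$ is indeed $\arctan a$.

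The only real obstacle is not a calculational one but a bookkeeping one: ensuring that the sign of $a$, the orientation of the circle $C_0(r)$, and the orientation of the plane are all consistent with the convention stipulated after Definition~\ref{def:logarithmic_spiral}. The trigonometric identification in the last step handles this uniformly for all $a\neq 0$, so no case analysis is needed.
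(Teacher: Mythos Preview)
Your proposal is correct and follows essentially the same direct-verification approach as the paper: compute $\gamma'$, compare it to a reference direction along the geodesic circle, and read off the angle. The only difference is cosmetic---the paper dots $\gamma'/\|\gamma'\|$ against the inward unit radial vector $-\gamma/\|\gamma\|$ to obtain $\sin\theta=a/\sqrt{1+a^2}$, whereas you dot against the circular tangent $T_C$ to obtain $\cos\theta=1/\sqrt{1+a^2}$ and then add an explicit rotation-matrix check for the sign; your extra step makes the orientation bookkeeping more transparent than the paper's argument, which is somewhat brief on that point.
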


\begin{proof}
	Note that $\sin\theta(t)$ is equal to the dot product of the unit tangent vector $\gamma'(t)/\|\gamma'(t)\|$ and the inward-pointing unit radial vector $-\gamma(t)/\|\gamma(t)\|$. Thus, from
	\begin{align*}
		\gamma'		&=	e^{-at}(-a\cos t-\sin t,-a\sin t+\cos t)		\\
		\|\gamma'\|	&=	e^{-at}\sqrt{1+a^2}
	\end{align*}
	we obtain
	\[
		\sin\theta = \Big\langle\frac{\gamma'}{\|\gamma'\|},-\frac{\gamma}{\|\gamma\|}\Big\rangle
			= \frac{a}{\sqrt{1+a^2}}
	\]
	so that $\theta = \arctan a$.
\end{proof}

\begin{proposition}\label{prop:loxodrome}
	Consider the sphere $S$ of radius $R>0$ centered at the origin $(0,0,0)\in\R^3$. For $a\neq 0$, the curve
	\[
		\gamma(t) = R\,\big(\sin(2t)\cos(a\ln\tan t), \sin(2t)\sin(a\ln\tan t), \cos(2t)),	\hspace{1cm}0<t<\frac{\pi}{2},
	\]
	is a loxodrome with characteristic angle $\theta=\operatorname{arccot}a$.
\end{proposition}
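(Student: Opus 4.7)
The key observation is that the curve $\gamma(t)$ is just the standard sphere parametrization
\[
	\phi(u,v) = (R\sin v\cos u, R\sin v\sin u, R\cos v)
\]
evaluated along the path $u(t)=a\ln\tan t$, $v(t)=2t$. Indeed, $\sin v = \sin(2t)$ and $\cos v = \cos(2t)$ match the third component and the radial factor of the first two components of $\gamma(t)$, while $u = a\ln\tan t$ supplies the angular argument. Note that $0<t<\pi/2$ corresponds exactly to $0<v<\pi$, which is the regular locus of $\phi$.

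My plan is then to compute the angle between $\gamma'(t)$ and the parallel tangent $\phi_u$ at the point $\gamma(t)$ by way of the first fundamental form. By the chain rule,
\[
	\gamma'(t) = u'(t)\,\phi_u + v'(t)\,\phi_v,
\]
with $v'(t)=2$ and, using $\tfrac{d}{dt}\ln\tan t = \tfrac{1}{\sin t\cos t} = \tfrac{2}{\sin 2t}$, we get $u'(t) = \tfrac{2a}{\sin v}$. From the computations in the proof of Proposition \ref{prop:sphere_Gaussian_curvature} we have $E=R^2\sin^2 v$, $F=0$, and $G=R^2$.

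Since the parallel through $\gamma(t)$ is the coordinate curve $\gamma_v$ with tangent $\phi_u$, the cosine of the angle $\theta$ from $\gamma_v'$ to $\gamma'$ is
\[
	\cos\theta = \frac{\langle\gamma',\phi_u\rangle}{\|\gamma'\|\,\|\phi_u\|}
		= \frac{u'(t)\,E}{\sqrt{(u')^2E+(v')^2G}\,\sqrt{E}}.
\]
Substituting the values above, the $\sin v$ factors cancel, $\|\gamma'\|$ simplifies to $2R\sqrt{1+a^2}$, and I expect
\[
	\cos\theta = \frac{a}{\sqrt{1+a^2}}, \qquad \sin\theta = \frac{1}{\sqrt{1+a^2}},
\]
so that $\cot\theta = a$, i.e.\ $\theta = \operatorname{arccot} a$, as claimed. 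Since this angle is independent of $t$ and hence of the parallel being crossed, $\gamma$ is a loxodrome. There is no serious obstacle here; the only thing to be careful about is the orientation convention, which must be checked so that we obtain $\operatorname{arccot} a$ rather than its supplement, but this follows from the sign of $u'(t)$ and the choice of outward normal fixed in Example \ref{ex:sphere}.
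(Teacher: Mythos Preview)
Your argument is correct and follows essentially the same approach as the paper: identify $\gamma(t)=\phi(a\ln\tan t,\,2t)$ in the standard parametrization and compute the angle between $\gamma'$ and a coordinate direction. The only differences are cosmetic---you use the chain rule and the coefficients $E,F,G$ to obtain $\cos\theta=\langle\gamma',\phi_u\rangle/(\|\gamma'\|\,\|\phi_u\|)=a/\sqrt{1+a^2}$, whereas the paper writes out $\gamma'$ explicitly in $\R^3$ and pairs it with $\phi_v$ to get $\sin\theta=1/\sqrt{1+a^2}$; both routes yield $\|\gamma'\|=2R\sqrt{1+a^2}$ and the same conclusion $\theta=\operatorname{arccot}a$.
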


\begin{proof}
	First observe that $\gamma(t) = \phi\big(a\ln\tan t,2t\big)$, where
	\[
		\phi(u,v) = R\,(\sin v\cos u, \sin v\sin u,\cos v)
	\]
	is the standard parametrization of the sphere $S$ (see Example \ref{ex:sphere}).

	Since $\gamma(t)$ spirals outwards from the north pole, the quantity $\sin\theta(t)$ is the dot product of the unit tangent vector $\gamma'(t)/\|\gamma'(t)\|$ and the southward-pointing unit radial vector $\phi_v/\|\phi_v\|$ at $\gamma(t)$. Direct computations yield
	\begin{align*}
		\gamma'		&=	2R\,\big(\cos(2t)\cos(a\ln\tan t) - a\sin(a\ln\tan t),	\\
				&\hspace{1.2cm}	\cos(2t)\sin(a\ln\tan t)+a\cos(a\ln\tan t),	\\
				&\hspace{1.2cm}	-\sin(2t)\hspace{.2pt}\big)			\\
		\|\gamma'\|	&=	2R\,\sqrt{1+a^2}	
	\end{align*}
	and
	\begin{align*}
		\phi_v(a\ln\tan t,2t)		&=	R\,\big(\cos(2t)\cos(a\ln\tan t), \cos(2t)\sin(a\ln\tan t),-\sin(2t)\big)		\\
		\|\phi_v(a\ln\tan t,2t)\|	&=	R.
	\end{align*}
	Thus,
	\[
		\sin\theta = \Big\langle\frac{\gamma'}{\|\gamma'\|}, \frac{\phi_v}{\|\phi_v\|}\Big\rangle = \frac{1}{\sqrt{1+a^2}}
	\]
	and we conclude that $\theta=\operatorname{arccot} a$.
\end{proof}

Note that in Proposition \ref{prop:loxodrome} the curve $\gamma$ is a logarithmic spiral about both the north and the south poles.

It is readily seen that every logarithmic spiral on the plane and the sphere with characteristic angle $\theta$ is the image of that given in Propositions \ref{prop:classic_logarithmic_curve} and \ref{prop:loxodrome}, respectively, by an isometry of the underlying space. Similarly, the planar and spherical loxodromes are precisely the rotations of these examples about the origin and the $z$-axis, respectively.

\begin{figure}
	\centering
	\includegraphics[trim={2.5cm 3cm 2.5cm 2.7cm},clip,scale=1]{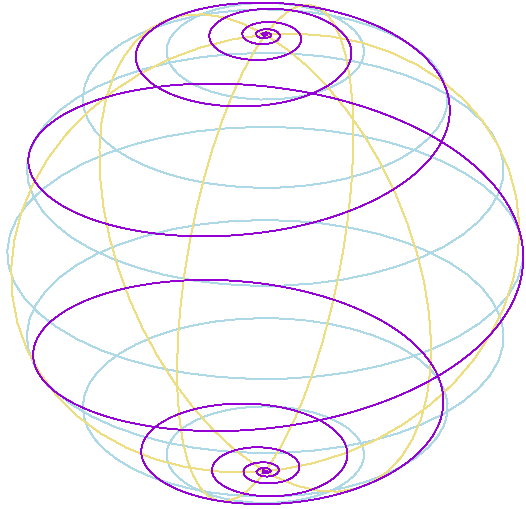}
	\caption{Loxodrome on a sphere}
	\label{fig:loxodrome}
\end{figure}


\section{Computation of the geodesic curvature}\label{sec:computation_of_the_geodesic_curvature}

Our aim in this section is to explicitly compute the geodesic curvature of logarithmic spirals based at $p\in S$ when $S$ has zero, positive, and negative constant Gaussian curvature, respectively. This corresponds to the setting of planes (and cylinders), spheres, and pseudospheres.

Our approach is to first compute the geodesic curvature of meridians and geodesic circles on surfaces of revolution $S$ and then to apply Liouville's theorem, as follows.

\begin{theorem}[Liouville's formula, \cite{doCarmo76} p.\ 253]\label{thm:Liouville}
	If $\gamma(t):I\to S$ is unit-speed, and if the parametrization $\phi(u,v)$ satisfies $\langle\phi_u,\phi_v\rangle=0$, then
	\[
		k = k_1\cos\theta + k_2\sin\theta + \frac{\d\theta}{\d t},
	\]
	where $k$, $k_1$ and $k_2$ are the geodesic curvatures of $\gamma(t)$, $\gamma_v(t)=\phi(t,v)$, and $\gamma_u(t)=\phi(u,t)$, respectively, and where $\theta(t)$ is the time-varying angle from  $\gamma_v'(t)$ to $\gamma'(t)$.
\end{theorem}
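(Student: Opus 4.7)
The plan is to work in the orthonormal frame $(e_1,e_2) = (\phi_u/\sqrt{E},\,\phi_v/\sqrt{G})$ adapted to the parametrization, which is well-defined precisely because of the hypothesis $F=\langle\phi_u,\phi_v\rangle=0$. Orient the frame so that $N\times e_1 = e_2$ and $N\times e_2 = -e_1$. Since $\theta(t)$ is the angle from $\gamma_v' \parallel e_1$ to the unit tangent $T = \gamma'$, we have
\[
	T = \cos\theta\,e_1 + \sin\theta\,e_2, \qquad N\times T = -\sin\theta\,e_1 + \cos\theta\,e_2.
\]

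Differentiating $T$ along $\gamma$ and collecting the $\theta'$ contribution gives
\[
	\gamma'' = \theta'\,(N\times T) + \cos\theta\,e_1' + \sin\theta\,e_2',
\]
where $e_i' = \tfrac{\d}{\d t} e_i(\gamma(t))$. Pairing with $N\times T$, the first term contributes $\theta'$ directly. For the remaining two inner products, I would introduce the scalar $\omega := \langle e_1',e_2\rangle$; orthonormality of $\{e_1,e_2\}$ forces $\langle e_1',e_1\rangle = \langle e_2',e_2\rangle = 0$ and $\langle e_2',e_1\rangle = -\omega$, which yields
\[
	\cos\theta\,\langle e_1', N\times T\rangle + \sin\theta\,\langle e_2', N\times T\rangle = (\cos^2\theta + \sin^2\theta)\,\omega = \omega,
\]
and hence $k = \theta' + \omega$, reducing the problem to identifying $\omega$.

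It remains to show $\omega = k_1\cos\theta + k_2\sin\theta$. Since $\omega$ depends linearly on the tangent direction $T$, I would expand
\[
	\omega = \cos\theta\,\langle\nabla_{e_1}e_1,e_2\rangle + \sin\theta\,\langle\nabla_{e_2}e_1,e_2\rangle,
\]
where $\nabla$ denotes the tangential projection of the $\R^3$-directional derivative (the normal components drop out after pairing with $e_2$). For the parallel $\gamma_v$, reparametrized to have unit tangent $e_1$, the definition of geodesic curvature gives $k_1 = \langle\nabla_{e_1}e_1,\,N\times e_1\rangle = \langle\nabla_{e_1}e_1,e_2\rangle$. For the meridian $\gamma_u$ with unit tangent $e_2$, we get $k_2 = \langle\nabla_{e_2}e_2,\,N\times e_2\rangle = -\langle\nabla_{e_2}e_2,e_1\rangle$, and a further application of the product rule $\langle\nabla_{e_2}e_1,e_2\rangle + \langle e_1,\nabla_{e_2}e_2\rangle = 0$ rewrites this as $k_2 = \langle\nabla_{e_2}e_1,e_2\rangle$. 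Substituting yields the claim. The main subtlety is the sign from $N\times e_2 = -e_1$: it is precisely this sign that ensures $k_1$ and $k_2$ enter the final formula with the same sign, rather than with opposite signs that would cancel in the isotropic direction $\theta=\pi/4$.
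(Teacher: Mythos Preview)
The paper does not supply its own proof of Theorem~\ref{thm:Liouville}; it simply records the statement and cites do~Carmo for the argument. Your proof is correct and is essentially the standard one (and indeed the one in do~Carmo): pass to the orthonormal frame $(e_1,e_2)=(\phi_u/\sqrt{E},\phi_v/\sqrt{G})$, write $T=\cos\theta\,e_1+\sin\theta\,e_2$, differentiate, and identify the connection form $\omega=\langle e_1',e_2\rangle$ with $k_1\cos\theta+k_2\sin\theta$ via the geodesic curvatures of the coordinate curves. The sign bookkeeping through $N\times e_2=-e_1$ is handled correctly.
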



\subsection{Planes}

We begin with the traditional planar logarithmic spirals.

\begin{proposition}\label{prop:logarithmic_spiral_curvature}
	Let $S\subset\R^3$ be the $xy$-plane, oriented by the upward-pointing unit normal vector field. The geodesic curvature of a logarithmic spiral with characteristic angle $\theta$ about any point $p\in S$ is
	\[
		k = \frac{1}{r} \cos\theta,
	\]
	where $r(t)$ is the distance from $\gamma(t)$ to $p$.
\end{proposition}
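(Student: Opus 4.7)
The plan is to apply Liouville's formula (Theorem~\ref{thm:Liouville}) to the polar parametrization of the plane centered at $p$. By translation invariance of the Euclidean metric, we may assume $p$ is the origin, so that Example~\ref{ex:plane} provides the polar parametrization $\phi(u,v)=(v\cos u,\,v\sin u,\,0)$ with orthogonal first fundamental form ($F=\langle\phi_u,\phi_v\rangle=0$), which is exactly what Liouville's formula requires.

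Next, I would identify the coordinate curves with the geometric objects relevant to the definition of a logarithmic spiral. The geodesic circle $C_p(r)$ is precisely the coordinate curve $\gamma_v(t)=\phi(t,r)$ (a Euclidean circle of radius $r$), while the meridian $\gamma_u(t)=\phi(u,t)$ is a radial ray from $p$. Since the characteristic angle $\theta$ of the logarithmic spiral is defined as the signed angle from the positively oriented tangent of $C_p(r)$ to $\gamma'$, it matches the angle $\theta$ appearing in Liouville's formula (this is the sign-convention sanity check that I expect to be the only delicate point).

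It then remains to compute the two coordinate curvatures. A unit-speed reparametrization of the radial ray $\gamma_u$ has $\gamma_u''=0$, so $\gamma_u$ is a geodesic and $k_2=0$. A unit-speed reparametrization of $\gamma_v$ traces out the circle of radius $r$, whose second derivative points inward with magnitude $1/r$; a direct check against the orientation $N=(0,0,1)$ via $k=\langle\alpha'',N\times\alpha'\rangle$ gives $k_1=1/r$. Finally, the defining property of a logarithmic spiral is that $\theta$ is constant along $\gamma$, so $\mathrm{d}\theta/\mathrm{d}t=0$.

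Substituting into Liouville's formula yields
\[
    k=k_1\cos\theta+k_2\sin\theta+\frac{\mathrm{d}\theta}{\mathrm{d}t}=\frac{1}{r}\cos\theta,
\]
which is the claimed expression. The main obstacle, as noted, is simply verifying that the orientation convention on $C_p(r)$ matches the one used in Liouville's theorem so that no stray sign appears; once this is pinned down, the result reduces to the classical facts that radial rays are geodesics and that Euclidean circles of radius $r$ have curvature $1/r$.
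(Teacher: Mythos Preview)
Your proposal is correct and mirrors the paper's own argument almost exactly: reduce to $p$ at the origin, apply Liouville's formula in the polar parametrization $\phi(u,v)=(v\cos u,\,v\sin u,\,0)$, compute $k_1=1/r$ for the circles and $k_2=0$ for the radial rays, and use $\d\theta/\d t=0$. The paper performs the same steps in the same order, so there is nothing to add.
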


\begin{proof}
	For ease of exposition, suppose for the moment that $p$ is the origin $(0,0,0)\in\R^3$. Consider the standard polar parametrization of $S$ given by
	\[
		\phi(u,v) = (v\cos u,v\sin u,0).
	\]
	Since $\langle\phi_u,\phi_v\rangle=0$, Theorem \ref{thm:Liouville} yields
	\[
		k = k_1 \cos\theta + k_2 \sin\theta + \frac{\d\theta}{\d t},
	\]
	where $k_1$ represents the geodesic curvatures of the circles
	\[
		\gamma_v(t) = (v\cos t,v\sin t,0),	\hspace{1cm}v>0,
	\]
	and $k_2$ represents the the geodesic curvatures of the rays
	\[
		\hspace{.8cm}\gamma_u(t) = (t\cos u,t\sin u,0),	\hspace{1cm}u\in[0,2\pi).
	\]
	From $\|\gamma_v'\|=v$ and $\|\gamma_u'\|=1$ it follows that $\gamma_v(t/v)$ and $\gamma_u(t)$ are unit-speed curves. Straightforward computations yield
	\[
		k_1(t/v)	=	\big\langle\gamma_v(t/v)'',N(t/v)\times\gamma_v(t/v)'\big\rangle = \frac{1}{v}
	\]
	and
	\[
		k_2(t) = \big\langle \gamma_u''(t), N(t)\times\gamma_u'(t) \big\rangle = 0,
	\]
	where $N=(0,0,1)$. Since $\theta(t)$ is constant for any logarithmic spiral, it follows that $\frac{\d\theta}{\d t}=0$. Theorem \ref{thm:Liouville} and the identity $v=r$ yield
	\[
		k = \frac{1}{r} \cos\theta.
	\]
	In the case that $p$ is not the origin, we obtain a nearly identical computation with respect to the polar parametrization $\psi(u,v) = \phi(u,v)-p$ centered at $p$.
\end{proof}


\subsection{Spheres}

We will consider a loxodrome on a sphere of radius $R$. Similarly to the case of a logarithmic spiral, the geodesic curvature of the loxodrome will be the geodesic curvature of the parallel multiplied by the cosine of the angle $\theta$ between the spiral and the parallels.  

\begin{proposition}\label{prop:sphere_loxodrome_curvature}
	The geodesic curvature of a loxodrome $\gamma(t)$ on a sphere $S$ of radius $R>0$, oriented by the outward-pointing unit normal vector field, with defining angle $\theta$ is
	\[
		k(r) = \frac{1}{R} \cot\frac{r}{R} \cos\theta,
	\]
	where $r$ is the distance from the north pole of the sphere to a parallel.
\end{proposition}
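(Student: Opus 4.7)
The plan is to follow the same Liouville-formula strategy used in Proposition \ref{prop:logarithmic_spiral_curvature}, now with the standard parametrization $\phi(u,v)=(R\sin v\cos u, R\sin v\sin u, R\cos v)$ from Example \ref{ex:sphere}. Since Proposition \ref{prop:sphere_Gaussian_curvature} already records $F=\langle\phi_u,\phi_v\rangle=0$, Theorem \ref{thm:Liouville} applies and gives
\[
	k = k_1\cos\theta + k_2\sin\theta + \frac{\d\theta}{\d t},
\]
where $k_1$ is the geodesic curvature of the parallels $\gamma_v(t)=\phi(t,v)$ and $k_2$ that of the meridians $\gamma_u(t)=\phi(u,t)$. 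Two of these three terms will vanish: the meridians are great semicircles, hence geodesics, so $k_2=0$; and by the definition of a loxodrome the characteristic angle $\theta$ is constant along $\gamma$, so $\d\theta/\d t=0$.

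The computational step is therefore to evaluate $k_1$. I would reparametrize $\gamma_v$ by arc length using $\|\gamma_v'\|=R\sin v$, differentiate twice, take the cross product with the outward normal $N=\tfrac{1}{R}\phi$, and apply Definition \ref{def:geodesic_curvature}. A short calculation of the form already carried out for the planar parallels in Proposition \ref{prop:logarithmic_spiral_curvature} should collapse to
\[
	k_1 = \frac{1}{R}\cot v.
\]
This is the only nontrivial cross-product computation required, and it is routine.

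Finally, I would translate from the parameter $v$ to the geodesic distance $r$ from the north pole. Since the meridian from the north pole to $\phi(u,v)$ is a great circle of radius $R$, its arc length is $Rv$, hence $v=r/R$. Substituting into $k=k_1\cos\theta$ produces the stated formula
\[
	k(r) = \frac{1}{R}\cot\!\frac{r}{R}\cos\theta.
\]
I do not expect any real obstacle here; the only subtlety is orienting $N\times\gamma_v'$ consistently with the outward normal convention of Example \ref{ex:sphere} so that the sign of $k_1$ comes out positive, as in the planar argument.
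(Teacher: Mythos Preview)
Your proposal is correct and follows essentially the same route as the paper: apply Liouville's formula to the standard spherical parametrization, observe $k_2=0$ and $\d\theta/\d t=0$, compute $k_1=\tfrac{1}{R}\cot v$ via the unit-speed reparametrization of the parallels, and then substitute $v=r/R$. The only cosmetic difference is that the paper verifies $k_2=0$ by direct computation rather than by citing that meridians are great circles.
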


\begin{proof}
	Recall the standard parametrization of $S$,
	\[
		\phi(u,v) = R\,(\sin v\cos u,\sin v\sin u,\cos v).
	\]
	Since $\langle\phi_u,\phi_v\rangle=0$, the conditions of Theorem \ref{thm:Liouville} apply. First note that the parallels
	\[
		\gamma_v(t) = R\,(\sin v\cos t,\sin v\sin t,\cos v)
	\]
	and the meridians
	\[
		\gamma_u(t) = R\,(\sin t\cos u,\sin t\sin u,\cos t)
	\]
	satisfy
	\[
		\|\gamma_v'\|=R\sin v,\hspace{1.1cm}	\|\gamma'_u\|=R.
	\]
	We deduce that $\gamma_v(t/R\sin v)$ and $\gamma_u(t/R)$ are their respective unit-speed reparametrizations. As
	\[
		N(p) = p/R
	\]
	for all $p\in S$, we obtain
	\begin{align*}
		k_1(t/R\sin v)	&=	\big\langle\gamma_v(t/R\sin v)'',N(t/R\sin v)\times\gamma_v(t/R\sin v)'\big\rangle = \frac{1}{R}\cot v		\\
		k_2(t/R)	&=	\big\langle\gamma_u(t/R)'',N(t/R)\times\gamma_u(t/R)'\big\rangle = 0.
	\end{align*}
	As $\theta(t)$ is constant, an application of Theorem \ref{thm:Liouville} gives
	\[
		k = \frac{1}{R}\cot v\cos\theta = \frac{1}{R}\cot\frac{r}{R}\cos\theta.
	\]
\end{proof}


\subsection{Pseudospheres}

We now consider logarithmic spirals on a pseudosphere of radius $R>0$. Recall from Example \ref{ex:pseudosphere} that the pseudosphere is obtained by rotating the tractrix
\[
	\alpha(t) = R\,\big(\sin t, \ln\tan(t/2) + \cos t\big),\hspace{1cm}0< t\leq \pi/2,
\]
in the $xz$-plane about the $z$-axis, with associated parametrization
\[
	\phi(u,v) = R\,\big(\sin v\cos u,\sin v\sin u,\ln\tan(v/2)+\cos v\big).
\]

We will consider a classical loxodrome on a pseudosphere that intersects the parallels $z=\text{const.\ }$ at a constant angle. Unlike the previous cases, loxodromes on a pseudosphere are not logarithmic spirals. Thus, we must consider them separately. 

\begin{proposition}\label{prop:pseudophere_loxodrome_curvature}
	The geodesic curvature of a loxodrome with characteristic angle $\theta$ on a pseudosphere of radius $R>0$, oriented by the outward-pointing unit normal vector field, is
	\[
		k= -\frac{1}{R}\cos\theta.
	\]
	In particular, the geodesic curvature is constant.
\end{proposition}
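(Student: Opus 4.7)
The plan is to mirror the strategy of Propositions \ref{prop:logarithmic_spiral_curvature} and \ref{prop:sphere_loxodrome_curvature}: invoke Liouville's formula with respect to the standard parametrization of the pseudosphere and evaluate each of its three ingredients. From the computation inside the proof of Proposition \ref{prop:pseudosphere_Gaussian_curvature} we already have $F = \langle\phi_u,\phi_v\rangle = 0$, so Theorem \ref{thm:Liouville} applies and gives
\[
    k = k_1\cos\theta + k_2\sin\theta + \frac{\d\theta}{\d t},
\]
where $k_1,k_2$ are the geodesic curvatures of the parallels $\gamma_v(t)=\phi(t,v)$ and meridians $\gamma_u(t)=\phi(u,t)$. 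Since a loxodrome meets every parallel at a constant angle, $\d\theta/\d t=0$, so only $k_1$ and $k_2$ remain to be understood.

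For the meridian term, I would invoke the standard fact (easily checked directly from $\|\gamma_u'\|=R|\cot v|$ and the fact that the tractrix is traced at unit speed in the intrinsic metric) that a meridian on a surface of revolution is a geodesic, yielding $k_2=0$. The bulk of the work therefore reduces to computing $k_1$. Here I would reparametrize $\gamma_v(t)=R(\sin v\cos t,\sin v\sin t,\ln\tan(v/2)+\cos v)$ by arc length via $t\mapsto t/(R\sin v)$ using $\|\gamma_v'\|=R\sin v$ from the first fundamental form, and plug the result into
\[
    k_1 = \langle \tilde\gamma_v'',\, N\times\tilde\gamma_v'\rangle,
\]
with $N(u,v)=(\cos u\cos v,\sin u\cos v,-\sin v)$ as computed in the proof of Proposition \ref{prop:pseudosphere_Gaussian_curvature}. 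A short computation of $N\times\tilde\gamma_v'$ should produce a vector whose inner product with $\tilde\gamma_v''$ collapses, via $\sin^2 t+\cos^2 t=1$, to the clean value $k_1=-1/R$, independent of $v$ and $t$.

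Assembling these pieces through Liouville's formula then yields
\[
    k = -\tfrac{1}{R}\cos\theta + 0\cdot\sin\theta + 0 = -\tfrac{1}{R}\cos\theta,
\]
which is the claimed expression, and is manifestly independent of the position along the curve.

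The only genuine obstacle is getting the sign of $k_1$ right: the outward-pointing normal on the pseudosphere has a downward third component ($-\sin v$), which is what flips the sign relative to the sphere case (Proposition \ref{prop:sphere_loxodrome_curvature}) and produces the distinctive minus sign in the final formula. I would be careful to derive the sign from the explicit cross product rather than by analogy, and to note that because the parallels have constant geodesic curvature $-1/R$ the resulting $k$ is constant, justifying the concluding remark of the statement.
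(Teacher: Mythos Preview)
Your proposal is correct and matches the paper's proof essentially line for line: apply Liouville's formula to the standard parametrization (using $F=0$), reparametrize the parallel $\gamma_v$ by arc length via $t\mapsto t/(R\sin v)$ and compute $k_1=-1/R$ directly from $\langle\tilde\gamma_v'',N\times\tilde\gamma_v'\rangle$ with the outward normal $N=(\cos u\cos v,\sin u\cos v,-\sin v)$, invoke the standard fact that meridians of a surface of revolution are geodesics so $k_2=0$, and note $\d\theta/\d t=0$. The only quibble is the parenthetical remark: $\|\gamma_u'(t)\|=R\cot t$ (a function of the meridian parameter $t$, not of $v$), and the tractrix is not unit-speed in this parametrization---but since you rightly fall back on the general geodesic fact for meridians, this does not affect the argument.
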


\begin{proof}
	Our approach, as above, is to apply Theorem \ref{thm:Liouville} with respect to the standard parametrization
	\[
		\phi(u,v) = R\,\big(\sin v\cos u,\sin v\sin u,\ln\tan(v/2)+\cos v\big).
	\]
	The parallels and meridians are, respectively,
	\begin{align*}
		\gamma_v(t) = R\,\big(\sin v\cos t,\sin v\sin t,\ln\tan(v/2)+\cos v\big)	\\
		\gamma_u(t) = R\,\big(\sin t\cos u,\sin t\sin u,\ln\tan(t/2)+\cos t\big).
	\end{align*}
	This gives
	\begin{align*}
		\gamma_v'(t) &= R\,\big(-\sin v\sin t,\sin v\cos t,0)		\\
		\gamma_u'(t) &= R\,\big(\cos t\cos u,\cos t\sin u,\cos t\cot t),
	\end{align*}
	so that
	\[
		\|\gamma_v'(t)\| = R\sin v,\hspace{1.1cm}	\|\gamma_u'(t)\| = R\cot t.
	\]
	Thus, $\gamma_v(t/R\sin v)$ is a unit-speed reparametrization of $\gamma_v(t)$. Taking
	\[
		N(u,v) = \frac{\phi_u\times\phi_v}{\|\phi_u\times\phi_v\|} = (\cos v\cos u,\cos v\sin u,-\sin v)
	\]
	yields
	\begin{align*}
		k_1(t/R\sin v)
			&=	\big\langle \gamma_v(t/R\sin v)'', N(t/R\sin v, v)\times \gamma_v(t/R\sin v)'\big\rangle			\\
			&=	\Big\langle -\frac{1}{R\sin v}\big(\cos(t/R\sin v), \sin(t/R\sin v),0\big), 					\\
			&		\hspace{1.8cm} \big(\sin v \cos(t/R\sin v), \sin v\sin(t/R\sin v), \cos v\big)\Big\rangle		\\
			&=	-\frac{1}{R}.
	\end{align*}
	Rather than perform a computation, we will here invoke the fact (see \cite[p.\ 255]{doCarmo76}) that the meridians $\gamma_u$ of any surface of revolution are geodesics, and thus in particular that $k_2=0$. By Theorem \ref{thm:Liouville}, we conclude that
	\[
		k = -\frac{1}{R}\cos\theta.
	\]
\end{proof}

Let us consider a logarithmic spiral around a point $p$ on a pseudosphere $S$.

\begin{proposition}\label{prop:pseudosphere_logarithmic_spiral_curvature}
	The geodesic curvature of a logarithmic spiral with characteristic angle $\theta$ on a pseudosphere of radius $R>0$, oriented by the outward-pointing unit normal vector field, is
	\[
		k(r) = \frac{1}{R} \coth{\frac{r}{R}} \cos{\theta},
	\]
	where $r$ is the distance from $p$. In particular, $k$ is constant.
\end{proposition}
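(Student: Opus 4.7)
The plan is to mirror the sphere proof (Proposition \ref{prop:sphere_loxodrome_curvature}) by applying Liouville's theorem in a parametrization adapted to the spiral's center $p$. The standard parametrization of the pseudosphere (Example \ref{ex:pseudosphere}) is centered ``at infinity,'' not at $p$, and so it must be replaced by a geodesic polar parametrization around $p$ whose coordinate curves are the geodesic circles $C_p(r)$ and the geodesic rays emanating from $p$.

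First I would pass to such a parametrization $\psi(u,v)$ of a punctured neighborhood of $p$, in which $v$ is the arc-length parameter along the radial geodesics from $p$ (so that $v$ coincides with the distance $r$ to $p$) and $u\in[0,2\pi)$ is the angular coordinate. Because the pseudosphere has constant Gaussian curvature $K=-1/R^2$ (Proposition \ref{prop:pseudosphere_Gaussian_curvature}), the standard classification of the metric in geodesic polar coordinates on a surface of constant curvature (see, e.g., \cite[Section 4-6]{doCarmo76}) identifies the coefficients of the first fundamental form as
\[
	E = R^2\sinh^2(v/R),\quad F = 0,\quad G = 1.
\]
The orthogonality condition $F=0$ guarantees that Theorem \ref{thm:Liouville} applies.

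Next I would extract the two constituent geodesic curvatures. The coordinate curves $\gamma_u(t) = \psi(u,t)$ are, by construction, unit-speed radial geodesics from $p$, so $k_2 = 0$. The coordinate curves $\gamma_v(t) = \psi(t,v)$ are the geodesic circles $C_p(v)$ of radius $v$, traversed with speed $\|\gamma_v'\| = \sqrt{E} = R\sinh(v/R)$; their unit-speed reparametrization has geodesic curvature $k_1 = \frac{1}{R}\coth(v/R)$. This is the main technical step of the proof, and I would carry it out either via the general formula for the geodesic curvature of a coordinate line in orthogonal coordinates (expressible in terms of $E$, $G$, and $E_v$), or by writing out an explicit local isometry to a model of the hyperbolic plane and computing there. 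The main obstacle is not the computation itself but justifying the change of parametrization, since unlike the sphere case (where the standard parametrization is already geodesic polar around the north pole) the existence of geodesic polar coordinates at $p$ on the pseudosphere relies on a Minding-type classification theorem rather than an explicit construction in $\R^3$.

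Finally, because the characteristic angle $\theta$ of a logarithmic spiral is constant along the curve, $\d\theta/\d t = 0$. Liouville's formula and the identification $v = r$ then yield
\[
	k = k_1\cos\theta + k_2\sin\theta + \frac{\d\theta}{\d t} = \frac{1}{R}\coth\frac{r}{R}\cos\theta,
\]
as claimed. As a sanity check, this is consistent both with the planar limit $k \to (1/r)\cos\theta$ as $R\to\infty$ (since $\frac{1}{R}\coth(r/R)\to 1/r$) and with the unified formula advertised in the introduction, where $\sqrt{-K}\coth(r\sqrt{-K}) = \frac{1}{R}\coth(r/R)$ for $K = -1/R^2$.
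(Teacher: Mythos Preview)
Your proposal is correct and follows essentially the same route as the paper: pass to geodesic polar coordinates about $p$, use the constant-curvature form of the metric to get the circle curvature $k_1=\tfrac{1}{R}\coth(r/R)$, observe $k_2=0$ and $\d\theta/\d t=0$, and apply Liouville. The only cosmetic difference is that the paper obtains the metric coefficient $\sqrt{G}=R\sinh(r/R)$ by solving the Jacobi equation $(\sqrt{G})_{rr}+K\sqrt{G}=0$ with the standard polar boundary conditions and then feeds this into the orthogonal-coordinate formula $k_1=G_r/(2G\sqrt{E})$, whereas you cite the classification of constant-curvature metrics directly; these are equivalent, and your ordering $(u,v)$ with the angular coordinate first is in fact more consistent with the Liouville statement (Theorem~\ref{thm:Liouville}) than the paper's $(r,u)$.
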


\begin{proof}
	Let us introduce polar coordinates $(r,u)$ with pole $p\in S$. Specifically, $r$ is the distance to $p$, and $u$ is an angular coordinate as above. The reader is advised that, to conform with do Carmo's convention for geodesic polar coordinates \cite[p.\ 286]{doCarmo76}, and in contrast with our convention for surfaces of revolution, the angular coordinate $u$ is now second in the system $(r,u)$.

	It is shown in \cite[p.\ 287]{doCarmo76} that
	\[
		E = 1, \qquad F = 0, \qquad \lim_{r\to 0}{G} = 0, \qquad \lim_{r \to 0}{(\sqrt{G})_r} = 1.
	\]
	In particular, the equality $F=0$ implies that we may apply Theorem \ref{thm:Liouville}. Let us now find expressions for $k_1$, $k_2$, and $\frac{\d\theta}{\d t}$.

	In \cite[p.\ 254]{doCarmo76} it is shown that for \emph{any} orthogonal parametrization $(r,u)$ we have 
	\[
		k_1 = \frac{G_r}{2G\sqrt{E}}.
	\]
	In particular, this is the value of the geodesic curvature of the geodesic circle $C_r(p)$. Further observe that the \emph{radial geodesics} (see \cite[p.\ 286]{doCarmo76}), obtained by fixing the angular coordinate $u$, are, by construction, geodesic curves and as such satisfy $k_2=0$. Finally, note that $\frac{\d\theta}{\d t}=0$, since $\theta(t)$ is constant for any logarithmic spiral.

	Our aim now is to determine $G(r,u)$ and $E(r,u)$. By the differential equation \cite[p.\ 288]{doCarmo76}
	\[
		(\sqrt{G})_{rr} + K \sqrt{G} = 0,
	\]
	in conjunction with Proposition \ref{prop:pseudosphere_Gaussian_curvature}, we arrive at
	\[
		\sqrt{G}(r) = c_1 \cosh\frac{r}{R} + c_2 \sinh\frac{r}{R}.
	\]
	Thus we obtain
	\begin{align*}
		c_1	&=	\lim_{r\to0} \:\Big(c_1 \cosh\frac{r}{R} + c_2 \sinh\frac{r}{R}\Big) = \lim_{r\to0} \sqrt{G} = 0			\\
		c_2	&=	R\;\lim_{r\to0} \,\frac{1}{R}\Big(c_1 \sinh\frac{r}{R} + c_2 \cosh\frac{r}{R}\Big) = R\,\lim_{r\to0} (\sqrt{G})_r = R,
	\end{align*}
	and it follows that $G(r) = R^2\sinh^2(r/R)$. Consequently,
	\[
		k_1 =\frac{G_r}{2G} = \frac{2R \sinh(r/R)\cosh(r/R)}{2 R^2\sinh^2(r/R)} = \frac{1}{R}\coth\frac{r}{R},
	\]
	and we conclude by Theorem \ref{thm:Liouville} that
	\[
		k = \frac{1}{R} \coth\frac{r}{R} \cos\theta.
	\]
\end{proof}

Note that we could use the approach of Proposition \ref{prop:pseudosphere_logarithmic_spiral_curvature} to establish Propositions \ref{prop:logarithmic_spiral_curvature} and \ref{prop:sphere_loxodrome_curvature}. Indeed, this method is more powerful insofar as it immediately establishes the natural analogues of Propositions \ref{prop:logarithmic_spiral_curvature}, \ref{prop:sphere_loxodrome_curvature}, and \ref{prop:pseudosphere_logarithmic_spiral_curvature} with respect to any surface of constant curvature. In this exposition, we have taken a more concrete route.


\section{Analytic properties of $k_\theta(K,r)$}\label{sec:analytic_properties_of_k}

Fix an angle $0<\theta<\pi$ and let $\gamma_\theta(K,r)$ be a logarithmic spiral with characteristic angle $\theta$ on a surface of constant Gaussian curvature $K$, parametrized by the distance $r>0$ to its center point, and let $k_\theta(K,r)$ be the geodesic curvature of $\gamma_\theta(K,r)$.

In Propositions \ref{prop:logarithmic_spiral_curvature}, \ref{prop:sphere_loxodrome_curvature} and \ref{prop:pseudosphere_logarithmic_spiral_curvature}, we computed $k_\theta(K,r)$ in terms of $r$ and $K$. In Propositions \ref{prop:sphere_Gaussian_curvature} and \ref{prop:pseudosphere_Gaussian_curvature}, we determined the Gaussian curvature $K$ in terms of $R$. Putting everything together, we have
\[
	k_\theta(K,r) = \cos\theta\cdot
		\begin{cases}
			\sqrt{K}\cot(r\sqrt{K})		&\text{if } K>0		\\
			1/r				&\text{if } K=0		\\
			\sqrt{-K}\coth(r\sqrt{-K})	&\text{if } K<0
		\end{cases}
\]
Since the maximum distance between any two points on a sphere of radius $R>0$ is $\pi R$, it follows that if $K>0$ then $k_\theta(K,r)$ is defined only when $r<\pi/\sqrt{K}$.

\begin{figure}
	\centering
	\includegraphics[trim={.4cm 1cm 1.3cm 1.9cm},clip,scale=1]{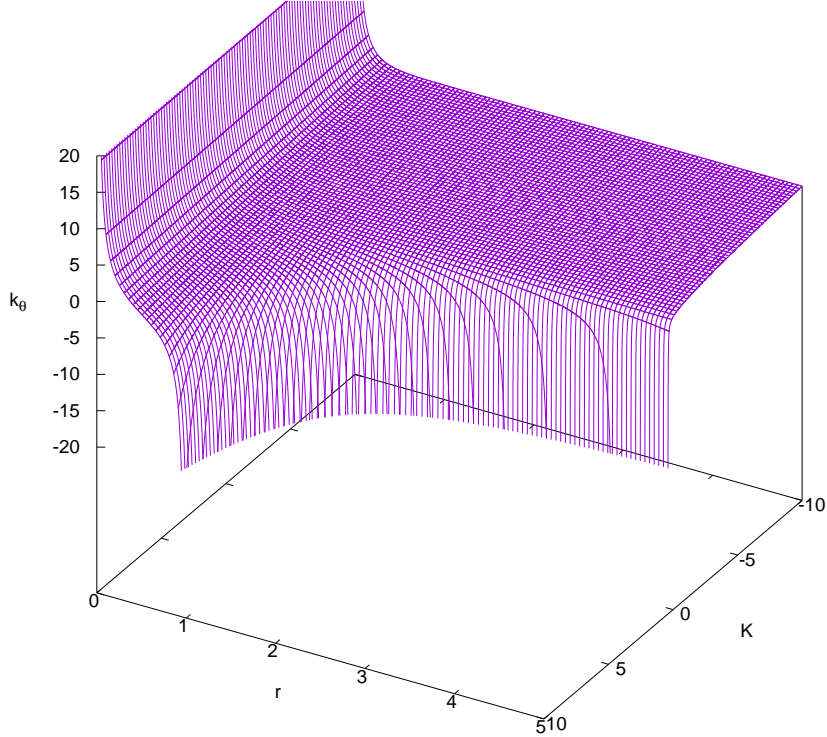}
	\caption{The function $k_\theta(K,r)$}
	\label{fig:graph_of_k}
\end{figure}

Our aim in this section is to study the function $k_\theta(K,r)$. We first show that the asymptotic behavior of $k_\theta(K,r)$ as $r$ approaches $0$ is independent of $K$. This aligns well with our intuitions, since any Riemannian structure on a smooth surface infinitesimally approximates that of the Euclidean plane.

\begin{theorem}\label{thm:short_time_behavior_of_k}
	For all $K,K'\in\R$,
	\[
		\lim_{r\to0} \frac{k_\theta(K,r)}{k_\theta(K',r)} = 1.
	\]
\end{theorem}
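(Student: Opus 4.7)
The plan is to reduce the claim to showing that $r \cdot k_\theta(K, r) \to \cos\theta$ as $r \to 0$, for every $K \in \R$. This single leading-order statement unifies the three branches of the piecewise formula for $k_\theta(K, r)$, and once it is in hand the ratio in the theorem degenerates to $\cos\theta / \cos\theta = 1$. The intuition is exactly the one offered in the paragraph just before the theorem: as $r \to 0$, every surface of constant curvature looks Euclidean, and so the spiral should inherit the $1/r$ curvature asymptotics of the planar logarithmic spiral of Proposition \ref{prop:logarithmic_spiral_curvature}.

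To establish this leading-order statement I would treat the three cases separately. The case $K = 0$ is immediate since $r \cdot k_\theta(0, r) = \cos\theta$ for every $r > 0$. For $K > 0$ I would invoke the standard small-argument expansion
\[
\cot x = \frac{1}{x} - \frac{x}{3} + O(x^3),
\]
so that substituting $x = r\sqrt{K}$ gives
\[
r \cdot k_\theta(K, r) = \cos\theta \cdot r\sqrt{K}\cot(r\sqrt{K}) = \cos\theta\Big(1 - \tfrac{r^2 K}{3} + O(r^4)\Big) \longrightarrow \cos\theta.
\]
The case $K < 0$ is entirely analogous, using the companion expansion $\coth x = \tfrac{1}{x} + \tfrac{x}{3} + O(x^3)$, so that $r\sqrt{-K}\coth(r\sqrt{-K}) = 1 + \tfrac{r^2(-K)}{3} + O(r^4) \to 1$.

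Taking the quotient then yields
\[
\lim_{r\to 0} \frac{k_\theta(K, r)}{k_\theta(K', r)} = \lim_{r\to 0} \frac{r \cdot k_\theta(K, r)}{r \cdot k_\theta(K', r)} = \frac{\cos\theta}{\cos\theta} = 1,
\]
as desired. There is no serious obstacle; the only mildly delicate point is the degenerate case $\theta = \pi/2$, in which $k_\theta(K, r)$ is identically zero for every $K$, so the ratio is $0/0$. This case must be either tacitly excluded from the statement or handled by the convention that both sides vanish identically, but in every other case $\cos\theta \neq 0$ and the argument above goes through without modification. Notice, as a bonus, that the expansions already record the first correction term $-\tfrac{r^2 K}{3}\cos\theta$, which foreshadows the derivative computation in the next theorem.
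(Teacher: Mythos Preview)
Your proof is correct and follows essentially the same strategy as the paper: both arguments establish that $k_\theta(K,r)\sim \frac{\cos\theta}{r}$ as $r\to 0$ (the paper phrases this as $\lim_{r\to0} k_\theta(0,r)/k_\theta(K,r)=1$, which is the same thing since $k_\theta(0,r)=\cos\theta/r$), and then take the quotient. The only cosmetic difference is that you invoke the Taylor expansions of $\cot$ and $\coth$ where the paper applies l'H{\^o}pital's rule; your observation about the degenerate case $\theta=\pi/2$ and the bonus remark on the $-\tfrac{r^2K}{3}\cos\theta$ correction are both apt.
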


\begin{proof}
	If $K>0$, then
	\begin{align*}
		\lim_{r\to0} \frac{k_\theta(0,r)}{k_\theta(K,r)}
		&=	\lim_{r\to0}\frac{1/r}{\sqrt{K}\cot(r\sqrt{K})}				\\
		&=	\lim_{r\to0}\frac{\tan(r\sqrt{K})}{r\sqrt{K}}				\\
		&=	\lim_{r\to0}\;\sec^2(r\sqrt{K})\\
		&=	1.
	\end{align*}
	where we have used l'H{\^o}pital's rule \cite[p.\ 109]{Rudin76} in the third equality. Similarly, if $K<0$, then
	\begin{align*}
		\lim_{r\to0} \frac{k_\theta(0,r)}{k_\theta(K,r)}
		&=	\lim_{r\to0}\frac{1/r}{\sqrt{-K}\coth(r\sqrt{-K})}				\\
		&=	\lim_{r\to0}\frac{\tanh(r\sqrt{-K})}{r\sqrt{-K}}				\\
		&=	\lim_{r\to0}\;\sech^2(r\sqrt{-K})\\
		&=	1.
	\end{align*}
	We conclude that
	\[
		\lim_{r\to0} \frac{k_\theta(K,r)}{k_\theta(K',r)} = \bigg(\lim_{r\to0} \frac{k_\theta(0,r)}{k_\theta(K,r)}\bigg)^{-1} \bigg(\lim_{r\to0}\frac{k_\theta(0,r)}{k_\theta(K',r)}\bigg)= 1.
	\]
\end{proof}

We now turn to investigate $k_\theta(K,r)$ as a function of the Gaussian curvature $K\in\R$.

\begin{lemma}\label{lem:k_is_continuous_in_K}
	The function $k_\theta(K,r)$ is continuous in $K$ on its domain of definition.
\end{lemma}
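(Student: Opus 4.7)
The plan is to reduce the statement to the observation that $k_\theta(K,r)/\cos\theta$, as a function of $K$ for fixed $r>0$, is continuous on each of the three pieces $K<0$, $K=0$, and $0<K<\pi^2/r^2$ separately, and then to check that the one-sided limits at $K=0$ agree with the value $1/r$.

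First I would dispense with the two open pieces. For $K$ in the open interval $(0,\pi^2/r^2)$, the map $K\mapsto \sqrt{K}\cot(r\sqrt{K})$ is a composition of the continuous functions $K\mapsto\sqrt{K}$, $x\mapsto rx$, $x\mapsto\cot x$ (defined and continuous on $(0,\pi)$), and multiplication, and hence is continuous. For $K<0$, the same argument applies to $K\mapsto\sqrt{-K}\coth(r\sqrt{-K})$, using that $\coth$ is continuous on $(0,\infty)$. Since multiplication by $\cos\theta$ preserves continuity, $k_\theta(\,\cdot\,,r)$ is continuous on $\R\setminus\{0\}$ within its domain.

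The remaining and only substantive step is continuity at $K=0$. Here I would make the substitution $u=r\sqrt{K}$ (resp.\ $u=r\sqrt{-K}$) to rewrite
\[
	\sqrt{K}\cot(r\sqrt{K}) = \frac{1}{r}\,u\cot u,\qquad \sqrt{-K}\coth(r\sqrt{-K}) = \frac{1}{r}\,u\coth u.
\]
Since $u\to 0$ as $K\to 0$ (from the positive or negative side, respectively), and since the standard limits $\lim_{u\to 0}u\cot u=1$ and $\lim_{u\to 0}u\coth u=1$ hold (these follow from $\cot u\sim 1/u$ and $\coth u\sim 1/u$ near $0$, or from an application of l'H\^opital's rule analogous to the one used in the proof of Theorem \ref{thm:short_time_behavior_of_k}), we obtain
\[
	\lim_{K\to 0^+} k_\theta(K,r) = \lim_{K\to 0^-} k_\theta(K,r) = \frac{\cos\theta}{r} = k_\theta(0,r).
\]
This equality of one-sided limits with the value at $K=0$ supplies the required continuity.

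I do not anticipate a real obstacle here: the cases $K\neq 0$ are immediate from composition, and at $K=0$ the computation is essentially the same as the one already carried out in Theorem \ref{thm:short_time_behavior_of_k}, only with the roles of $r$ and $K$ interchanged in the scaling variable $u=r\sqrt{|K|}$. The mild point to keep in mind is that for $K>0$ one must restrict to $r<\pi/\sqrt{K}$, but this restriction is automatic on the domain of $k_\theta$ and causes no difficulty since we are fixing $r>0$ and letting $K$ vary in a neighborhood of $0$, where eventually $r\sqrt{K}<\pi$.
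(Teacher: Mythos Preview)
Your proof is correct and follows essentially the same approach as the paper: both reduce to checking that the one-sided limits of $k_\theta(K,r)/\cos\theta$ at $K=0$ equal $1/r$, and both ultimately appeal (explicitly or implicitly) to l'H\^opital's rule for this. The only cosmetic differences are that you first substitute $u=r\sqrt{|K|}$ and invoke the standard limits $u\cot u\to 1$ and $u\coth u\to 1$, whereas the paper applies l'H\^opital directly in the variable $K$, and you spell out the (trivial) continuity on the open pieces $K\neq 0$, which the paper leaves implicit.
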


\begin{proof}
	Fix $r>0$. We have
	\[
		\lim_{K\to0^+} \frac{k_\theta(K,r)}{\cos\theta} = \lim_{K\to0^+} \frac{\sqrt{K}}{\tan(r\sqrt{K})} = \lim_{K\to0} \frac{1}{r\sec^2(r\sqrt{K})} = \frac{1}{r}
	\]
	and
	\[
		\lim_{K\to0^-} \frac{k_\theta(K,r)}{\cos\theta} = \lim_{K\to0^+} \frac{\sqrt{K}}{\tanh(r\sqrt{K})} = \lim_{K\to0} \frac{1}{r\sech^2(r\sqrt{K})} = \frac{1}{r}
	\]
	where in each instance we invoke l'H{\^o}pital's rule in the second equality. Since $k_\theta(0,r)=\cos\theta\cdot 1/r$, it follows that $k_\theta(K,r)$ is continuous in $K$ at $0$.
\end{proof}

For fixed $r>0$, define the auxiliary function
\[
	f(t) = 
		\begin{cases}
			\sqrt{t}\cot(r\sqrt{t})		&\text{if } t>0		\\
			1/r				&\text{if } t=0		\\
			\sqrt{-t}\coth(r\sqrt{-t})	&\text{if } t<0,
		\end{cases}
\]
and observe that its derivative is
\[
		f'(t) = \frac{1}{2\sqrt{|t|}}\cdot
			\begin{cases}
				\phantom{-}\cot(r\sqrt{t}) -r\sqrt{t}\csc^2(r\sqrt{t})	&\text{if } t>0		\\
				-\coth(r\sqrt{-t})+r\sqrt{-t}\csch^2(r\sqrt{-t})	&\text{if } t<0.
			\end{cases}
\]
From the identity $k_\theta(K,r)=\cos\theta\cdot f(K)$, we see that for admissible values of $r$ and $K$, the quantity $f(K)$ represents the geodesic curvature of the positively-oriented geodesic circle of radius $r$ concentric with the spiral $\gamma_\theta(K,r)$.

\begin{theorem}\label{thm:derivative_of_k_at_0}
	For every $r>0$,
	\[
		\frac{\partial k_\theta}{\partial K}(0,r) = -\frac{r}{3}\cos\theta.
	\]
\end{theorem}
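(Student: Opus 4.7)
The plan is to reduce the statement to computing $f'(0)$, since with the auxiliary function $f$ introduced just before the theorem we have $k_\theta(K,r) = \cos\theta \cdot f(K)$ for fixed $r>0$. Because $f$ is defined by three different formulas around $K=0$, I would establish the two one-sided derivatives at $0$ separately and verify that they agree; the theorem will then follow by linearity, with $\partial k_\theta/\partial K(0,r) = \cos\theta \cdot f'(0)$.

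For the right-hand derivative I would substitute $u = r\sqrt{K}$, so that $K = u^2/r^2$ and $K\to 0^+$ corresponds to $u\to 0^+$. The difference quotient becomes
\[
	\frac{f(K) - f(0)}{K} = \frac{r(u\cot u - 1)}{u^2} = \frac{r(u\cos u - \sin u)}{u^2 \sin u}.
\]
Using the Taylor expansions $u\cos u - \sin u = -u^3/3 + O(u^5)$ and $u^2 \sin u = u^3 + O(u^5)$ — or, equivalently, two applications of l'H\^opital's rule — this one-sided limit evaluates to $-r/3$.

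For the left-hand derivative I would perform the analogous substitution $u = r\sqrt{-K}$, so $K = -u^2/r^2$ and again $K\to 0^-$ corresponds to $u\to 0^+$. A parallel manipulation gives
\[
	\frac{f(K) - f(0)}{K} = -\frac{r(u\cosh u - \sinh u)}{u^2 \sinh u},
\]
and the hyperbolic expansions $u\cosh u - \sinh u = u^3/3 + O(u^5)$ and $u^2 \sinh u = u^3 + O(u^5)$ yield the same value $-r/3$. Since the two one-sided derivatives coincide, $f'(0) = -r/3$ exists and the theorem follows. The only even mildly delicate point is the sign accounting in the $K<0$ branch arising from the minus sign in $K = -u^2/r^2$; the underlying limit computations themselves are entirely routine and mirror the L'H\^opital-style calculations already performed in the proofs of Theorem \ref{thm:short_time_behavior_of_k} and Lemma \ref{lem:k_is_continuous_in_K}.
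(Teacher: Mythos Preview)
Your argument is correct. You compute the one-sided difference quotients $\frac{f(K)-f(0)}{K}$ directly via the substitution $u=r\sqrt{\pm K}$ and a Taylor expansion, which is the most natural route and the sign bookkeeping in the $K<0$ branch is handled properly.

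The paper proceeds a little differently: rather than attacking the difference quotient head-on, it first computes the one-sided limits $\lim_{t\to 0^\pm} f'(t)$ of the \emph{derivative} at nearby points (via repeated l'H\^opital), and only then applies l'H\^opital once more to the difference quotient $\frac{f(t)-f(0)}{t}$ to conclude $f'(0)=\lim_{t\to 0} f'(t)=-r/3$. Your approach is shorter and more direct. The paper's detour, however, buys something extra: by exhibiting $f'(0)$ as the limit of $f'(t)$ it simultaneously shows that $\partial k_\theta/\partial K$ is continuous at $K=0$, which is exactly what is needed for Corollary~\ref{cor:k_is_continuously_differentiable_in_K}. With your argument that corollary would require a separate (easy) verification.
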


\begin{proof}
	We will show that $f'(0) = -\frac{1}{3}r$. Direct computations yield 
	\begin{align*}
		\lim_{t\to0^+}\,f'(t)
			&=	r\lim_{t\to0^+} \frac{\cot(r\sqrt{t}) - r\sqrt{t}\csc^2(r\sqrt{t})}{2r\sqrt{t}}	\\
			&=	r\lim_{a\to0} \frac{\sin a\cos a - a}{2a\sin^2a}					\\
			&=	r\lim_{a\to0} \,\frac{\cos^2a-\sin^2a-1}{2\sin^2a+4a\sin a\cos a}		\\
			&=	r\lim_{a\to0} \,\frac{-\sin^2a}{\sin^2a+2a\sin a\cos a}				\\
			&=	r\lim_{a\to0}\,\frac{-\sin a}{\sin a+2a\cos a}					\\
			&=	r\lim_{a\to0}\, \frac{-\cos a}{3\cos a-2a\sin a}				\\
			&=	-\frac{r}{3}
	\end{align*}
	and
	\begin{align*}
		\lim_{t\to0^-}\,f'(t)
			&=	r\lim_{t\to0^+} \frac{r\sqrt{t}\csch^2(r\sqrt{t}) - \coth(r\sqrt{t})}{2r\sqrt{t}}	\\
			&=	r\lim_{a\to0} \frac{a-\sinh a\cosh a}{2a\sinh^2a}					\\
			&=	r\lim_{a\to0} \,\frac{1-\cosh^2a-\sinh^2a}{2\sinh^2a+4a\sinh a\cosh a}			\\
			&=	r\lim_{a\to0} \,\frac{-\sinh^2a}{\sinh^2a+2a\sinh a\cosh a}				\\
			&=	r\lim_{a\to0} \,\frac{-\sinh a}{\sinh a+2a\cosh a}					\\
			&=	r\lim_{a\to0}\,\frac{-\cosh a}{3\cosh a+2a\sinh a}					\\
			&=	-\frac{r}{3},
	\end{align*}
	where in each case we use l'H{\^o}pital's rule in the third and sixth equalities, and where $a=r\sqrt{t}$. Hence
	\[
		\lim_{t\to0} f'(t) = -\frac{r}{3}.
	\]
	Lemma \ref{lem:k_is_continuous_in_K} ensures that
	\[
		\lim_{t\to0}\:f(t)-f(0) = 0,
	\]
	and thus a further application of l'H{\^o}pital's rule yields
	\[\label{eq:derivative_of_k_at_0}\tag{$*$}
		f'(0) = \lim_{t\to 0}\frac{f(t)-f(0)}{t} = \lim_{t\to0} f'(t) = -\frac{r}{3}.
	\]
	The result follows as $k_\theta(K,r)=\cos\theta\cdot f(K)$.
\end{proof}

\begin{corollary}\label{cor:k_is_continuously_differentiable_in_K}
	The function $k_\theta(K,r)$ is continuously differentiable in $K$, wherever it is defined.
\end{corollary}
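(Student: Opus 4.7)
The plan is to assemble the corollary from two ingredients that are essentially already in hand. First, away from $K=0$, the function $k_\theta(K,r)=\cos\theta\cdot f(K)$ is manifestly smooth in $K$: for $K>0$ it is a composition of $\sqrt{\,\cdot\,}$, $\cot$, and multiplication (all smooth on the relevant open sets, and $\cot(r\sqrt{K})$ is defined precisely where $r<\pi/\sqrt{K}$, which is the stated domain), and the analogous remark applies for $K<0$ using $\coth$. So the only nontrivial point is continuous differentiability at $K=0$.

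At $K=0$, Theorem \ref{thm:derivative_of_k_at_0} already provides existence of $\frac{\partial k_\theta}{\partial K}(0,r)=-\frac{r}{3}\cos\theta$. Moreover, the computations in that proof establish the one-sided limits
\[
	\lim_{t\to 0^+} f'(t) = -\frac{r}{3} = \lim_{t\to 0^-} f'(t),
\]
so $\lim_{K\to 0} \frac{\partial k_\theta}{\partial K}(K,r) = -\frac{r}{3}\cos\theta = \frac{\partial k_\theta}{\partial K}(0,r)$. This is exactly continuity of the partial derivative at $K=0$, which combined with the first observation gives continuous differentiability on the entire domain.

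I expect no real obstacle here: the corollary is a bookkeeping consequence of Theorem \ref{thm:derivative_of_k_at_0} plus the smoothness of the explicit expressions for $K\neq 0$. If anything, the only thing to be careful about is wording --- one must note that the limits $\lim_{t\to 0^\pm} f'(t)$ needed for continuity of $f'$ at $0$ were already computed in the proof of the preceding theorem (they are what enabled the final l'H\^opital step producing $f'(0)$), so the corollary really is immediate and can be stated in a few lines without any new calculation.
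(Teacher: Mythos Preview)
Your proposal is correct and follows essentially the same approach as the paper: invoke the limits $\lim_{t\to 0^\pm} f'(t)=-r/3$ computed in the proof of Theorem~\ref{thm:derivative_of_k_at_0} to obtain continuity of $\frac{\partial k_\theta}{\partial K}$ at $K=0$, and observe that continuity at $K\neq 0$ is clear from the explicit formulas. The paper's proof is just a terser version of yours, citing equation~(\ref{eq:derivative_of_k_at_0}) directly.
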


\begin{proof}
	Fix $r>0$. Equation (\ref{eq:derivative_of_k_at_0}) in the proof of Theorem \ref{thm:derivative_of_k_at_0} implies that
	\[
		\lim_{K\to0}\frac{\partial k_\theta}{\partial K}(K,r) = \frac{\partial k_\theta}{\partial K}(0,r)
	\]
	and we conclude by noting that $\frac{\partial k_\theta}{\partial K}(K,r)$ is clearly continuous at every $K\neq0$ at which $k_\theta(K,R)$.
\end{proof}

\begin{lemma}\label{lem:f_is_decreasing}
	$f'(t)<0$ for every $t\in\R$ at which $f(t)$ is defined.
\end{lemma}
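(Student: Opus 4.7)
The plan is to split into three cases according to the sign of $t$, reduce each case to a standard elementary inequality, and invoke Theorem \ref{thm:derivative_of_k_at_0} at $t=0$.

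For the case $t=0$, I would simply cite Theorem \ref{thm:derivative_of_k_at_0}, which gives $f'(0) = -r/3 < 0$.

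For the case $t > 0$, I would introduce the substitution $a = r\sqrt{t}$, which lies in $(0,\pi)$ since $k_\theta(K,r)$ is defined on the sphere only for $r < \pi/\sqrt{K}$. Multiplying through by $2\sqrt{t}\sin^2(a) > 0$ and using $\csc^2(a) = 1/\sin^2(a)$, the inequality $f'(t) < 0$ is equivalent to
\[
	\sin(a)\cos(a) - a < 0,
\]
i.e.\ $\sin(2a) < 2a$. This is immediate from the classical inequality $\sin(x) < x$ for all $x > 0$ (note $2a \in (0,2\pi)$, and for $x \geq 1$ one has $\sin(x) \leq 1 < x$).

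For the case $t < 0$, I would set $b = r\sqrt{-t} > 0$ and reduce $f'(t) < 0$ in an analogous manner to the inequality
\[
	\sinh(2b) > 2b,
\]
which holds for all $b > 0$, again by a standard comparison (e.g.\ the Taylor expansion of $\sinh$ has only nonnegative terms, with a strictly positive cubic contribution). I do not anticipate any real obstacle: the computation of $f'(t)$ has already been displayed in the text, and in each of the three regimes the sign is governed by one of the textbook inequalities $\sin x < x < \sinh x$ for $x > 0$. The only point requiring a little care is to verify that the reduction in the case $t > 0$ remains valid on the full interval $a \in (0,\pi)$, not just on $(0,\pi/2)$, so that the conclusion covers every admissible radius on the sphere.
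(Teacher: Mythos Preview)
Your proposal is correct and follows essentially the same route as the paper: split into the three cases $t>0$, $t=0$, $t<0$, handle $t=0$ via the computation $f'(0)=-r/3$ from Theorem~\ref{thm:derivative_of_k_at_0}, and in the other two cases clear denominators to reduce to the elementary inequalities $\sin a\cos a<a$ and $\sinh b\cosh b>b$ (equivalently $\sin 2a<2a$ and $\sinh 2b>2b$). The only cosmetic difference is that the paper proves $\sin a\cos a<a$ by comparing derivatives, whereas you invoke $\sin x<x$ directly via the double-angle identity; both arguments are equivalent and your version is marginally more economical.
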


\begin{proof}
	First suppose $t>0$. From the inequality
	\[
		(\sin a\cos a)' = \cos^2 a - \sin^2 a \;\;<\;\; \cos^2 a + \sin^2 a = a',	\hspace{1cm}a\notin\pi\Z,
	\]
	and the identity
	\[
		\sin a\cos a = a,	\hspace{1cm}\text{when }a=0,
	\]
	it follows that
	\[
		\sin a\cos a < a,	\hspace{1cm}\text{for all }a>0.
	\]
	Dividing through by $\sin^2 a$ yields
	\[
		\cot a < a \csc^2 a,	\hspace{1cm}a\notin \pi\Z,
	\]
	and the substitution $a=r\sqrt{t}$ gives
	\[
		f'(t) = \frac{1}{2\sqrt{|t|}}\cdot \big(\cot(r\sqrt{t}) -r\sqrt{t}\csc^2(r\sqrt{t})\big) < 0,	\hspace{1cm}r\sqrt{t}\notin \pi\Z.
	\]

	When $t<0$, the result follows from
	\[
		(\sinh a\cosh a)' = \cosh^2 a + \sinh^2 a \;\;>\;\; \cosh^2 a - \sinh^2 a = a',	\hspace{1cm}a\neq 0,
	\]
	by a similar derivation. Finally, we established that $f'(0)=-\frac{1}{3}r<0$ in the course of the proof of Theorem \ref{thm:derivative_of_k_at_0}.
\end{proof}

\begin{theorem}\label{thm:norm_of_variation_of_curvature}
	For all $0<\theta<\pi$, for all $K\in\R$, and for sufficiently small $r>0$,
	\[
		\frac{\partial}{\partial K}\,|k_\theta(K,r)|\leq 0,
	\]
	with equality if and only if $\theta=\frac{\pi}{2}$, that is, if and only if $\gamma_\theta(K,r)$ is a geodesic ray.
\end{theorem}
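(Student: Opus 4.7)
The plan is to reduce the claim to the strict monotonicity of the auxiliary function $f$ introduced just before Theorem \ref{thm:derivative_of_k_at_0}, and then invoke Lemma \ref{lem:f_is_decreasing}. Since $k_\theta(K,r) = \cos\theta\cdot f(K)$, we have
\[
	|k_\theta(K,r)| = |\cos\theta|\cdot |f(K)|,
\]
so once we control the sign of $f(K)$, the $K$-derivative of the absolute value falls out of the formula directly.

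First I would show that, for any fixed $K \in \R$, one has $f(K) > 0$ provided $r$ is small enough. For $K \leq 0$ this is immediate, since $\coth$ is positive on $(0,\infty)$ and $f(0) = 1/r > 0$. For $K > 0$, the expression $\sqrt{K}\cot(r\sqrt{K})$ is positive exactly when $r\sqrt{K} < \pi/2$, which holds whenever $r < \pi/(2\sqrt{K})$. In both cases, for sufficiently small $r$, we may drop the absolute value on $f$ and write
\[
	|k_\theta(K,r)| = |\cos\theta|\cdot f(K).
\]

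Differentiating in $K$ and invoking Lemma \ref{lem:f_is_decreasing} to conclude that $f'(K) < 0$ for every admissible $K$, we obtain
\[
	\frac{\partial}{\partial K}|k_\theta(K,r)| = |\cos\theta|\cdot f'(K) \leq 0,
\]
with equality if and only if $\cos\theta = 0$, that is, if and only if $\theta = \pi/2$.

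Finally, to interpret the equality case, I would note that $\theta = \pi/2$ means $\gamma_{\pi/2}(K,r)$ meets every geodesic circle $C_p(r)$ orthogonally; in geodesic polar coordinates centered at $p$, this forces $\gamma_{\pi/2}(K,r)$ to coincide with a radial geodesic, so $\gamma_{\pi/2}(K,r)$ is a geodesic ray. This is consistent with the identity $k_\theta = \cos\theta \cdot f(K)$, which shows that $k_{\pi/2} \equiv 0$. I do not anticipate any real obstacle: the strict negativity in Lemma \ref{lem:f_is_decreasing} does all the work, and the only delicate point is arranging $f(K) > 0$ so as to remove the absolute value, which is precisely what the hypothesis \emph{sufficiently small $r$} buys us.
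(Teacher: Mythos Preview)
Your proposal is correct and follows essentially the same route as the paper: use the hypothesis on $r$ to ensure $f(K)>0$, then invoke Lemma~\ref{lem:f_is_decreasing} to obtain $f'(K)<0$, so that $\frac{\partial}{\partial K}|k_\theta(K,r)|=|\cos\theta|\,f'(K)\le 0$ with equality exactly when $\theta=\pi/2$. The paper's only cosmetic difference is that it breaks into cases on the sign of $\cos\theta$ rather than factoring out $|\cos\theta|$ as you do.
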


\begin{proof}
	Fix $0<\theta<\pi$ and $K\in\R$, and let $r>0$ be small enough so that the positively-oriented geodesic circle $C_{p_K}(r)$ has geodesic curvature $f(K)>0$. A direct inspection yields
	\[
		k_\theta(K,r) = \cos\theta\cdot f(K)
			\begin{cases}
				>0	&\text{if }0<\theta<\frac{\pi}{2}	\\
				=0	&\text{if }\theta = \frac{\pi}{2}	\\
				<0	&\text{if }\frac{\pi}{2}<\theta<\pi
			\end{cases}
	\]
	and, as Lemma \ref{lem:f_is_decreasing} provides $f'(K)<0$,
	\[
		\frac{\partial}{\partial K}\,k_\theta(K,r) = \cos\theta\cdot f'(K)
			\begin{cases}
				<0	&\text{if }0<\theta<\frac{\pi}{2}	\\
				=0	&\text{if }\theta = \frac{\pi}{2}	\\
				>0	&\text{if }\frac{\pi}{2}<\theta<\pi,
			\end{cases}
	\]
	from which
	\[
		\frac{\partial}{\partial K}\, |k_\theta(K,r)|
			\begin{cases}
				=0	&\text{if }\theta=\frac{\pi}{2}		\\
				<0	&\text{otherwise.}
			\end{cases}
	\]
\end{proof}

Note that $k_\theta(K,r)$ and $\frac{\partial}{\partial K} k_\theta(K,r)$ depend up to a sign on the orientations of $\gamma_\theta(K,r)$ and the underlying surfaces, while the quantity $\frac{\partial}{\partial K}\,|k_\theta(K,r)|$ does not.

Conceptually, we may take Corollary \ref{cor:k_is_continuously_differentiable_in_K} to imply that, for fixed $\theta$, the family of logarithmic spirals $\gamma_\theta(K,r)$ varies smoothly in $K$ in a suitably weak sense. From this perspective, Theorem \ref{thm:norm_of_variation_of_curvature} expresses the property that $\gamma_\theta(K,r)$ ``reduces tension'' or ``uncoils'' about its center as $K$ increases.

\bibliographystyle{plain}
\bibliography{spirals}

\end{document}